\theoremstyle{plain}
\newtheorem{theorem}{Theorem}[section]
\newtheorem{claim}[theorem]{Claim}
\newtheorem{lemma}[theorem]{Lemma}
\newtheorem{corollary}[theorem]{Corollary}
\theoremstyle{definition}
\newtheorem{definition}[theorem]{Definition}
\newtheorem*{remark*}{Remark}
\newtheorem*{notation*}{Notation}
\DeclareRobustCommand{\overleftharpoon}{\mathpalette{\overarrow@\leftharpoonfill@}}
\def\leftharpoonfill@{\arrowfill@\leftharpoondown\mn@relbar\mn@relbar}
\DeclareMathSymbol{\leftharpoondown}{\mathrel}{MnSyA}{'112}
\DeclareMathSymbol{\mn@relbar}{\mathrel}{MnSyA}{'320}
\newcommand{\case}[2]{%
  \par
  \addvspace{\medskipamount}%
  \noindent\textbf{Case #1: #2}\nobreak
}
\begin{document}

\title{Hypergraphs of arbitrary uniformity with vanishing codegree Tur{\'a}n density}
\author{James Sarkies\footnote{Trinity College, University of Cambridge, United Kingdom. Research supported by the Trinity College Summer Studentship Scheme. Email: \href{mailto:js2756@cam.ac.uk}{\nolinkurl{js2756@cam.ac.uk}}}}
\date{}
\maketitle

\begin{abstract}
    The codegree Tur{\'a}n density \(\pi_{\text{co}}(F)\) of a \(k\)-uniform hypergraph (or \(k\)-graph) \(F\) is the infimum over all
    \(d\) such that a copy of \(F\) is contained in any sufficiently large \(n\)-vertex \(k\)-graph \(G\) with the property that any 
    \((k-1)\)-subset of \(V(G)\) is contained in at least \(dn\) edges. The problem of determining \(\pi_{\text{co}}(F)\) for a \(k\)-graph \(F\) is 
    in general very difficult when \(k \geq 3\), and there were previously very few nontrivial examples of \(k\)-graphs \(F\) for which \(\pi_{\text{co}}(F)\) was 
    known when \(k \geq 4\).

    In this paper, we prove that \(C_\ell^{(k)-}\), the \(k\)-uniform tight cycle of length \(\ell\) minus an edge, has vanishing codegree Tur{\'a}n density if and only if \(\ell \equiv 0, \pm 1  \text{ ({mod} } k\text{)}\) when \(\ell \geq k + 2\). This generalises a result of Piga, Sales and Sch{\"u}lke, who proved that \(\pi_\text{co}(C_\ell^{(3)-}) = 0\) when \(\ell \geq 5\).  The method used to prove that \(\pi_\text{co}(C_\ell^{(k)-}) = 0\) when \(\ell \equiv \pm 1  \text{ ({mod} } k\text{)}\) and \(\ell \geq 2k - 1\) in fact gives a rather larger class of \(k\)-graphs with vanishing codegree Tur{\'a}n density.
     We also answer a question of Piga and Sch{\"u}lke by proving that another
    family of \(k\)-graphs, studied by them, has vanishing codegree Tur{\'a}n density.
\end{abstract}
 
\section{Introduction}
A \(k\)-uniform hypergraph (or \(k\)-graph) \(F\) is a set \(V(F)\), called the vertices of \(F\), together with a set \(E(F)\) of \(k\)-subsets of \(V(F)\),
called the edges of \(F\). We shall often denote the edge \(\{v_1, \ldots, v_k\}\) simply by \(v_1 \ldots v_k\).
Given a \(k\)-graph \(F\), one of the central problems in extremal combinatorics is 
to determine the Tur{\'a}n number \(\text{ex}(n, F)\), defined to
be the largest possible number of edges in an \(n\)-vertex \(k\)-graph containing no copy of \(F\) as a subgraph.
Since we are interested in the limiting behaviour, we may define the Tur{\'a}n density of \(F\) as

\begin{equation*}
    \pi(F) = \lim_{n \to \infty} \frac{\text{ex}(n, F)}{\binom{n}{k}}.
\end{equation*}

A simple averaging argument shows that this limit always exists. The problem of determining \(\pi(F)\) for a \(k\)-graph \(F\) is answered by the Erd{\H{o}}s-Stone-Simonovits theorem \cite{ erdos1946structure, erdos1966limit} when \(k = 2\); however the problem becomes notoriously difficult
when \(k \geq 3\). Indeed, even the Tur{\'a}n density \(\pi(K_4^{(3)})\), where \(K_4^{(3)}\) is the complete \(3\)-graph on four vertices, is still unknown more than eighty years after the question was first asked by Tur{\'a}n \cite{turan1941extremal}. For a 
survey of known results regarding the Tur{\'a}n density, see for example the survey of Keevash \cite{keevash2011hypergraph}.

In this paper, we consider a natural variant of the Tur{\'a}n density called the codegree Tur{\'a}n density, introduced by Mubayi and Zhao \cite{mubayi2007co}. Given a \(k\)-graph \(G\) and a subset \(S\) of \(V(G)\), 
the degree of \(S\), denoted by \(d_G(S)\), is the number of edges of \(G\) containing \(S\). The minimum codegree \(\delta_{k-1}(G)\) of \(G\) is defined to be the minimum of \(d_{G}(S)\) over all \((k-1)\)-subsets \(S\) of \(V(G)\).

Given a \(k\)-graph \(F\), the codegree Tur{\'a}n number \(\text{ex}_\text{co}(n, F)\) is defined to be the largest possible value of \(\delta_{k-1}(G)\) when \(G\) is an \(n\)-vertex \(k\)-graph containing no copy of \(F\). Again focusing on the limiting behaviour, we 
define the codegree Tur{\'a}n density of \(F\) as 

\begin{equation*}
    \pi_\text{co}(F) = \lim_{n \to \infty} \frac{\text{ex}_\text{co}(n, F)}{n}.
\end{equation*}

This limit always exists \cite{mubayi2007co}, and it is not hard to see that \(\pi_\text{co}(F) \leq \pi(F)\). Similarly to 
the usual Tur{\'a}n density, the problem of determining \(\pi_{\text{co}}(F)\) for a \(k\)-graph \(F\) seems to be very difficult
in general when \(k \geq 3\). 

Some recent progress was made in the \(k = 3\) case, when Ding, Lamaison, Liu, Wang and Yang \cite{ding20243} proved that \(\pi_{\text{co}}(F) = 0\) whenever \(F\) is a so-called layered \(3\)-graph with vanishing uniform Tur{\'a}n density, where the uniform Tur{\'a}n density is another variant of the Tur{\'a}n density introduced by Erd{\H{o}}s and S{\'o}s \cite{erdHos1982ramsey}. There are a small number more of nontrivial examples of \(k\)-graphs whose codegree Tur{\'a}n density is known (most of them have \(k = 3\)). Among them are  the \(3\)-graph on four vertices with three edges, \(K_4^{(3)-}\), due to Falgas-Ravry, Marchant, Vaughan and Volec \cite{falgas2017codegree}; the \(3\)-graph \(F_{3, 2}\) with \(V(F_{3, 2}) = \{1, 2, 3, 4, 5\}\) and \(E(F_{3, 2}) = \{123, 124, 125, 345\}\) due to Falgas-Ravry, Marchant, Pikhurko and Vaughan \cite{falgas2015codegree}; and some of the projective geometries \(PG_{m}(q)\) when \(m = 2, 3\) due to Keevash and Zhao \cite{keevash2007codegree} and Zhang and Ge \cite{zhang2021codegree}.

In this paper, we determine the codegree Tur{\'a}n densities of a number of \(k\)-graphs when \(k \geq 3\). Our first result is concerned with the following family of hypergraphs.

\begin{definition}
The \textit{\(k\)-uniform tight cycle of length \(\ell\)}, denoted by \(C_\ell^{(k)}\), is the \(k\)-graph with \(V(C_\ell^{(k)}) = \{v_1, \ldots, v_\ell\}\) and \(E(C_\ell^{(k)}) = \{v_i v_{i + 1} \ldots v_{i + k - 1} : 1 \leq i \leq \ell\}\), where the  indices are taken modulo \(\ell\).
\end{definition}

 Let \(C_\ell^{(k)-}\) be the \(k\)-graph \(C_\ell^{(k)}\) minus an edge. Determining the Tur{\'a}n densities (and variants thereof) for \(C_\ell^{(k)}\) and \(C_\ell^{(k)-}\) has received some recent interest. 
 The most recent progress was made by Sankar \cite{sankar2024turandensity4uniformtight}, who proved that \(\pi(C_\ell^{(4)}) = \frac{1}{2}\) when \(\ell\) is sufficiently large and not divisible by \(4\). 
 
 The other known results concern the \(k = 3\) case. If \(3 \mid \ell\), then \(C_\ell^{(3)}\) is tripartite, so \(\pi(C_\ell^{(3)}) = 0\) due to a result of Erd{\H{o}}s \cite{erdos1964extremal} (whence all of \(\pi(C_\ell^{(3)})\), \(\pi_{\text{co}}(C_\ell^{(3)})\), \(\pi(C_\ell^{(3)-})\) and \(\pi_{\text{co}}(C_\ell^{(3)-})\) vanish). When \(3 \nmid \ell\), Lidick{\'y}, Mattes and Pfender \cite{lidicky2024hypergraph} proved that \(\pi(C_\ell^{(3)-}) = \frac{1}{4}\) if \(\ell \geq 5\); Kam{\v{c}}ev, Letzter and Pokrovskiy \cite{kamvcev2024turan} proved that \(\pi(C_\ell^{(3)}) = 2\sqrt{3} - 3\) whenever \(\ell\) is sufficiently large; and Piga, Sanhueza-Matamala and Schacht \cite{piga2024codegree} and Ma \cite{ma2024codegree} proved that \(\pi_{\text{co}}(C_\ell^{(3)}) = \frac{1}{3}\) for \(\ell \geq 10\).

 Piga, Sales and Sch{\"u}lke \cite{piga2023codegree} proved that \(\pi_{\text{co}}(C_\ell^{(3)-}) = 0\) for all \(\ell \geq 5\). We prove the following result for higher uniformities.

\begin{theorem} \label{thm:tightCycle}
    Let \(k \geq 3\), \(\ell \geq k + 2\) be integers. Then \(\pi_{\textnormal{co}}(C_\ell^{(k)-}) = 0\) if and only if \(\ell \equiv 0, \pm 1  \pmod{k}\).
\end{theorem}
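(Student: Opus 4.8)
The plan is to treat the two implications separately, the substance lying in the ``if'' direction.

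\textbf{The ``if'' direction.} When $\ell\equiv 0\pmod k$ this is immediate: colour $v_i\in V(C_\ell^{(k)-})$ by $i\bmod k$. The $\ell-k+1$ non-wrapping edges are visibly rainbow, and a short calculation shows the $k-2$ wrapping edges $\{v_{\ell-k+2+s},\dots,v_\ell,v_1,\dots,v_{s+1}\}$ ($0\le s\le k-3$) are rainbow precisely because $k\mid\ell$. Hence $C_\ell^{(k)-}$ is $k$-partite, so $\pi(C_\ell^{(k)-})=0$ by Erd\H{o}s' theorem on $k$-partite hypergraphs, and $\pi_{\mathrm{co}}\le\pi$ finishes this case. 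The real work is the case $\ell\equiv\pm1\pmod k$ (note that together with $\ell\ge k+2$ this forces $\ell\ge 2k-1$), where $C_\ell^{(k)-}$ is \emph{not} $k$-partite. The basic reformulation is that $C_\ell^{(k)-}$ embeds in $G$ exactly when $G$ contains a tight walk $w_1w_2\cdots w_{\ell+k-2}$ with $w_{\ell+j}=w_j$ for $1\le j\le k-2$ and $w_1,\dots,w_\ell$ distinct; that is, a tight path of length $\ell-1$ whose last $k-2$ vertices coincide with its first $k-2$. The strategy is: first, using only $\delta_{k-1}(G)\ge\varepsilon n$, greedily build a \emph{closing gadget} --- distinct vertices $a_1,\dots,a_{k-1},b_1,\dots,b_{k-1}$ such that each of the $k-2$ sets $\{b_j,b_{j+1},\dots,b_{k-1},a_1,\dots,a_j\}$ ($1\le j\le k-2$) is an edge (these are exactly the edges of $C_\ell^{(k)-}$ not lying on the long tight path, and each can be secured in turn because the relevant $(k-1)$-set has $\ge\varepsilon n-O(1)$ extensions); then join the ordered tuples $\vec a=(a_1,\dots,a_{k-1})$ and $\vec b=(b_1,\dots,b_{k-1})$ by a tight path on exactly $\ell$ vertices, avoiding the gadget.

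Everything thus reduces to a \emph{connecting lemma}: in a $k$-graph with $\delta_{k-1}\ge\varepsilon n$, for disjoint ordered $(k-1)$-tuples $\vec x,\vec y$, for which lengths $m$ does a tight path on $m$ vertices run from $\vec x$ to $\vec y$ (off any prescribed $o(n)$ vertices)? This is the crux and is exactly where the arithmetic condition enters. One cannot expect all large $m$: the extremal examples of the lower bound are $k$-graphs with linear minimum codegree in which the colours along every tight walk are forced to be periodic with period $k$, so attainable lengths fall into restricted residue classes modulo $k$. I would therefore run a stability-type dichotomy --- if $G$ is ``far'' from every such periodic configuration, a short expansion argument in the transition digraph on ordered $(k-1)$-tuples shows tight paths of \emph{every} sufficiently large length connect any two ends, and then the greedy plan above yields $C_\ell^{(k)-}$ for every $\ell$; whereas if $G$ is close to a periodic configuration, one analyses that configuration directly, the point being that it still contains $C_\ell^{(k)-}$ exactly when the induced periodic colouring can be closed up, which a $\gcd$-computation involving the auxiliary graph $H_\ell$ on $\mathbb{Z}_k$ with edges $\{j,j+\ell\}$ ($1\le j\le k-2$) pins down to the condition $\ell\equiv 0,\pm1\pmod k$. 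The main difficulty I anticipate is making this dichotomy quantitative and, in particular, using the freedom in the closing gadget (the choices in its greedy construction) to ensure its two ends are compatible with the connecting lengths actually available; I expect this is precisely what gets abstracted into the ``larger class of $k$-graphs'' mentioned in the introduction --- roughly, any $k$-graph that is a tight path plus a bounded ``closure'' that closes up compatibly.

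\textbf{The ``only if'' direction.} For each $\ell\not\equiv 0,\pm1\pmod k$ I would exhibit a $C_\ell^{(k)-}$-free $k$-graph on $n$ vertices with minimum codegree $\Omega(n)$. The natural attempt is colour-based: take a balanced map $\phi\colon V\to A$ into a small finite abelian group $A=A(k,\ell)$ and declare a $k$-set to be an edge according to a rule on $(\phi(u_1),\dots,\phi(u_k))$ chosen so that (i) every $(k-1)$-set still has $\Omega(n)$ extensions, making the minimum codegree linear, and (ii) the $\phi$-values along any tight walk are forced (period $k$). Given (ii), a copy of $C_\ell^{(k)-}$ would produce $g\colon\mathbb{Z}_k\to A$ constant on the components of $H_\ell$ and satisfying one further linear constraint, and computing the component sizes of $H_\ell$ in terms of $\gcd(\ell,k)$ shows this is impossible for the stated residues. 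A ``$\sum\phi(u_i)\in\{c\}$''-type rule already handles all $\ell$ with $\gcd(\ell,k)>1$ (e.g.\ $\mathbb{Z}_2$-parity for $k\equiv0\pmod4$, $\mathbb{Z}_3$-sums for $k=6$); the cases with $\gcd(\ell,k)=1$ (such as $k=5,\ \ell=7$) need a more delicate construction, since a rule forcing full periodicity is necessarily a sum-rule and those leave too much freedom when the two components of $H_\ell$ have coprime sizes. I expect this last point to be the technically fiddly part of the lower bound, but overall the lower bound should be substantially less involved than the upper bound for $\ell\equiv\pm1$.
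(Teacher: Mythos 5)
Your proposal is correct only on the two easy fragments (the $k$-partiteness argument when $k \mid \ell$, and the sum-rule construction when $\gcd(\ell,k)>1$); in both main parts there are genuine gaps. For the ``if'' direction with $\ell\equiv\pm1\pmod k$, everything rests on your connecting lemma/stability dichotomy, and you give no argument for it. At minimum codegree $\varepsilon n$ with $\varepsilon$ arbitrarily small there is no known (and no plausible elementary) statement that two prescribed ordered $(k-1)$-tuples can be joined by a tight path of a prescribed length, nor any classification of the obstructions as ``periodic configurations'': the extremal landscape below the codegree threshold is far richer (note $\pi_{\mathrm{co}}(C_\ell^{(3)})=1/3$, so tight connectivity genuinely fails at small $\varepsilon$), and your ``far from periodic $\Rightarrow$ expansion in the transition digraph gives all large lengths'' step is an unproved and, as stated, implausible claim that would itself be the whole theorem. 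You also work with genuine embeddings, which forces you to handle vertex-disjointness and gadget-avoidance; the paper sidesteps all of this by passing to homomorphic copies via supersaturation ($\pi_{\mathrm{co}}=\pi_{\mathrm{co}}^{\mathrm{hom}}$), reducing $\ell$ to $2k\pm1$ by an explicit homomorphism $C_\ell^{(k)-}\to C_{\ell-k}^{(k)-}$, and then embedding a homomorphic copy by a Ramsey-plus-ordered-link argument (Theorem \ref{thm:sufficientCondition}): one partitions $V(C_{2k\pm1}^{(k)-})$ into two special vertices $V_1=\{w,x\}$ and an ordered set $V_2$ so that each link $L(w)[V_2]$, $L(x)[V_2]$ sits inside an ordered complete $(k-1)$-partite graph, colours $|V_2|$-sets of $V(G)$ by which link they fail to host, and derives a contradiction from Ramsey's theorem together with Erd\H{o}s's theorem on complete $(k-1)$-partite subgraphs. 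No connectivity, stability, or absorption is needed.

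For the ``only if'' direction, you correctly start from the construction $f(v_1)+\cdots+f(v_k)=1$ over $\mathbb{Z}/m\mathbb{Z}$ and correctly diagnose that it fails when $\gcd(\ell,k)=1$, but you stop exactly where the work begins: you do not produce the ``more delicate construction''. In the paper, one first shows (via the forced relations $f(v_i)=f(v_{i+k})$ for $i\nequiv 0,-1\pmod\ell$) that any copy of $C_\ell^{(k)-}$ in the sum-construction must use an edge of the restricted form $a^t b^s$ with $t,s\ge2$ and $ta+sb=1$; one then deletes all such edges and restores linear codegree by adding edges of the forms $a^{t-1}b^s c(a,b)$ and $a^t b^{s-1} d(a,b)$, where $c(a,b),d(a,b)$ are chosen (for a suitable modulus $m=kp$) to satisfy a list of algebraic non-degeneracy conditions; a propagation argument along tight walks then shows the modified graph is still $C_\ell^{(k)-}$-free. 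This edge-deletion-and-replacement step, with the careful choice of $c$ and $d$, is the substantive content of the lower bound and is absent from your proposal.
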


We deduce the sufficiency of the condition \(\ell \equiv 0, \pm 1 \pmod{k}\) from the more general Theorem \ref{thm:sufficientCondition}, which gives a sufficient condition for a \(k\)-graph \(F\) to satisfy \(\pi_\text{co}(F) = 0\). In order to state Theorem \ref{thm:sufficientCondition}, we make the following definitions.

\begin{definition}
    An \textit{ordered \(k\)-graph} is a \(k\)-graph \(G\), together with a total ordering \(<\) on \(V(G)\). If \(F\), \(H\) are ordered \(k\)-graphs, then we say that \(F\) is an \textit{ordered subgraph} of \(H\) if there is an (injective) order-preserving map 
    \(f: V(F) \to V(H)\) such that if \(v_1 \ldots v_k \in E(F)\), then \(f(v_1) \ldots f(v_k) \in E(H)\).

    Let \(V\) be a totally ordered set of size \(kt\), and let \(V_1\) be set of the \(t\) smallest elements of \(V\); \(V_2\) be the set of the next \(t\) smallest; and so on. The \textit{ordered complete \(k\)-partite \(k\)-graph} \(K_{t < \ldots < t}^{(k)}\) is the ordered \(k\)-graph with vertex set \(V\) and \(E(K_{t < \ldots < t}^{(k)}) = \{v_1 \ldots v_k : v_1 \in V_1, \ldots, v_k \in V_k\}\).
    \end{definition}

    \begin{definition}   
        Given a \(k\)-graph \(G\) and a vertex \(v \in G\), the \textit{link graph \(L_G(v)\)} is the \((k - 1)\)-graph with vertices \(V(L_{G}(v)) = V(G) \setminus \{v\}\), and
        \(w_1 \ldots w_{k - 1} \in E(L_{G}(v))\) if and only if \(v w_1 \ldots w_{k - 1} \in E(G)\). 
        
        Also, we write \(L_G^+(v)\) for the \((k-1)\)-graph obtained by adding the vertex \(v\) (and no edges) to
        \(L_G(v)\). (Hence, in particular, \(V(L_G^+(v)) = V(G)\).)
        
        When the \(k\)-graph \(G\) is clear from the context, we may omit it from the notation and simply write \(L(v)\) for \(L_G(v)\).
    \end{definition}

We can now state our sufficient condition for vanishing codegree Tur{\'a}n density.

    \begin{theorem} \label{thm:sufficientCondition}
    Suppose \(F\) is a \(k\)-graph with the following properties:
    \begin{enumerate}[(i)]
        \item There is a partition \(V(F) = V_1 \dot\cup V_2\) such that for any \(e \in E(F)\), \(|e \cap V_1| = 1\); and
        \item There is a total ordering of \(V_2\) and an integer \(t\) such that for any \(v \in V_1\), the link graph \(L_F(v) [V_2]\) is 
    an ordered subgraph of \(K_{t<\ldots <t}^{(k - 1)}\).
    \end{enumerate}
     
    Then \(\pi_\textnormal{co}(F) = 0\).
    \end{theorem}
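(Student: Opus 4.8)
The plan is a two-stage embedding argument. Throughout, write $V_2=\{u_1<\dots<u_m\}$ for the ordered set from hypothesis (ii), and for $v\in V_1$ let $L_F(v)$ be its link. By (i) every edge of $L_F(v)$ lies in $V_2$, so (ii) says precisely that there is an order-preserving injection of $V(L_F(v))$ into $K^{(k-1)}_{t<\dots<t}$ carrying edges to edges; since it is order-preserving, the preimages of the $k-1$ parts are blocks $B^v_1,\dots,B^v_{k-1}$ of $V(L_F(v))$, each of size at most $t$, and every edge of $L_F(v)$ is a transversal $\{b_1,\dots,b_{k-1}\}$ with $b_c\in B^v_c$. (We may assume $F$ has no isolated vertices, as these can be added back trivially at the end.)

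\emph{Stage 1: reduction to complete-multipartite links.} I would first replace $F$ by a blow-up $\hat F$ whose links are complete multipartite. Partition $V_2$ into ``atoms'' $A_1,\dots,A_q$ by declaring $u\sim u'$ when, for every $v\in V_1$, the two vertices lie in a common $B^v_c$ or are both outside $V(L_F(v))$; since $F$ has no isolated vertices each atom lies inside some $B^v_c$ and so has size at most $t$. Blow each $A_s$ up to a set $X_s$ of size $t$, put $\hat V_2=X_1\cup\dots\cup X_q$, and for $v\in V_1$ let $Q^v_c$ be the union of those $X_s$ with $A_s\subseteq B^v_c$; the $Q^v_1,\dots,Q^v_{k-1}$ are pairwise disjoint. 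Let $\hat F$ have vertex set $\hat V_2\cup V_1$ and join each $v\in V_1$ to every transversal of $(Q^v_1,\dots,Q^v_{k-1})$, so $L_{\hat F}(v)$ is complete $(k-1)$-partite with these parts. Mapping $v\mapsto v$ for $v\in V_1$ and embedding each atom injectively into its blow-up shows $F\subseteq\hat F$. Hence it suffices to prove $\pi_{\mathrm{co}}(\hat F)=0$; rename $\hat F$ as $F$, so that now every $L_F(v)$, $v\in V_1$, is complete $(k-1)$-partite with parts $Q^v_1,\dots,Q^v_{k-1}$ that are disjoint unions of ``blocks'' $X_s$, and $V_2=\bigcup_s X_s$ with $|X_s|=t$.

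\emph{Stage 2: locating $F$ in $G$.} Fix $\varepsilon>0$ and a $k$-graph $G$ on $n$ vertices with $\delta_{k-1}(G)\ge\varepsilon n$, $n$ large; note this says $|N_G(S)|\ge\varepsilon n$ for every $(k-1)$-set $S$, equivalently every link $L_G(w)$ is a $(k-1)$-graph of codegree at least $\varepsilon n$. The plan is to choose an embedding $\phi$ of $V_2$ into $V(G)$ and then, for each $v\in V_1$, choose an image $z_v\in\bigcap_S N_G(\phi(S))$, the intersection ranging over the (boundedly many) transversals $S$ of $(Q^v_1,\dots,Q^v_{k-1})$. Since $|V_1|$ and $|V_2|$ are constants, once each such intersection has size at least $\delta n$ for some fixed $\delta>0$ we may pick the $z_v$ one at a time, avoiding all previously used vertices, and obtain a copy of $F$. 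So everything reduces to: produce an embedding $\phi$ of $V_2$ with $\bigl|\bigcap_S N_G(\phi(S))\bigr|\ge\delta n$ for every $v\in V_1$ simultaneously.

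\emph{The main obstacle.} This last step is where hypothesis (ii) is used essentially, and is the hard part. For a single fixed $v$ a (pseudo)random choice of $\phi$ already works: the transversals $S$ are rainbow across the parts $Q^v_c$, so replacing $N_G(\phi(S))$ by $N_G(\phi(S))\cap N_G(\phi(S'))$ for another transversal $S'$ alters only one part's vertex, and since every $(k-1)$-set has at least $\varepsilon n$ common neighbours, a bounded iteration of such restrictions costs only a bounded power of $\varepsilon$ in expectation, whence $\mathbb{E}_\phi\bigl|\bigcap_S N_G(\phi(S))\bigr|=\Omega(n)$. The real difficulty is making one $\phi$ good for all $v\in V_1$ at once; I would handle this by a dependent-random-choice / weak-regularity step --- passing to a dense regular portion of $G$, or equivalently building $\phi$ block by block while maintaining a linear-sized candidate set of images for each $v\in V_1$ --- and this bookkeeping is the technical heart of the argument. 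That some use of the partite hypothesis is unavoidable can be seen from the fact that there exist $k$-graphs $G$ with $\delta_{k-1}(G)\ge\varepsilon n$ whose link $(k-1)$-graphs are all $K^{(k-1)}_{m}$-free for $m$ large depending only on $\varepsilon$, so one cannot simply find a single ``universal'' complete $k$-partite-type subgraph of $G$ into which $F$ embeds.
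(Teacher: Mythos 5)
Your Stage 2 contains the gap that sinks the proposal: you reduce the whole theorem to the assertion that there is a single embedding $\phi$ of $V_2$ into $V(G)$ with $\bigl|\bigcap_S N_G(\phi(S))\bigr|\ge\delta n$ simultaneously for every $v\in V_1$, and then you explicitly defer this assertion (``the technical heart'') to an unspecified dependent-random-choice/weak-regularity argument. That deferred statement is at least as strong as the theorem itself (you ask for linear-sized common neighbourhoods, whereas by Lemma \ref{lem:supersaturation} a homomorphism, i.e.\ a single common neighbour per $v$, suffices), so nothing is actually proved. The heuristic you offer even for a single $v$ is not justified by the hypothesis as stated: $\delta_{k-1}(G)\ge\varepsilon n$ bounds each $|N_G(S)|$ individually but says nothing directly about $|N_G(S)\cap N_G(S')|$, which can be empty for particular pairs; at best an averaging/Cauchy--Schwarz step rescues pairs of transversals differing in one vertex, and none of this is carried out. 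Worse, your Stage 1 blow-up silently discards the common total ordering of $V_2$: after renaming you keep only ``each link is complete $(k-1)$-partite with parts that are unions of blocks,'' and your Stage 2 sketch never uses any order-compatibility between the partitions $(Q^v_1,\dots,Q^v_{k-1})$ for different $v$. But the common order is exactly what makes one $\phi$ serve all $v$ at once; without it the statement is not clearly true (already for $k=3$ one can prescribe three bipartition patterns of a $4$-element $V_2$ admitting no common separating order), so an argument that never invokes hypothesis (ii) cannot be completed as described.

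For contrast, the paper reaches the weaker, sufficient goal by a different mechanism: colour each $b$-set $U\subseteq V(G)$ (where $b=|V_2|$) by an index $i$ such that no vertex $x$ has $L_F(w_i)[V_2]$ as an ordered subgraph of $L_G^+(x)[U]$; if every $U$ were coloured, Ramsey's theorem gives a large monochromatic clique $V'$, and an averaging argument over the codegree condition inside a fixed ordered $(k-1)$-partite host $H$ on $V'$, combined with Erd\H{o}s's theorem that $\pi(K_{t,\dots,t})=0$, produces a vertex $x$ whose link on $V'$ contains an ordered $K^{(k-1)}_{t<\dots<t}$, contradicting the colour of the relevant $b$-set. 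Hence some $U$ receives no colour, and sending $V_2$ to $U$ by the unique order-preserving bijection (this is where the common ordering enters) and each $w_i$ to its witness $x_i$ gives the homomorphism. To salvage your plan you would need either to carry out the simultaneous-$\phi$ construction in full, using the ordering, or to lower your target from linear-sized intersections to mere nonemptiness and supply a mechanism, such as the Ramsey colouring above, that achieves it.
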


Our final result gives another family of hypergraphs with vanishing codegree Tur{\'a}n density.
\begin{definition}
    The \textit{\(k\)-uniform zycle of length \(\ell\)}, denoted by \(Z_\ell^{(k)}\), is the \(k\)-graph with
    \begin{equation*}
        V(Z_\ell^{(k)}) = \{v^j_i : 1 \leq i \leq \ell, 1 \leq j \leq k - 1\}
    \end{equation*}
    \begin{equation*}
        E(Z_\ell^{(k)}) = \{v^1_i v^2_i \ldots v^{k - 1}_i v^j_{i + 1} : 1 \leq i \leq \ell, 1 \leq j \leq k - 1\},
    \end{equation*}
    where the lower indices are taken modulo \(\ell\).
\end{definition}

We write \(Z_\ell^{(k)-}\) to denote the \(k\)-graph \(Z_\ell^{(k)}\) minus one edge. Piga and Sch{\"u}lke \cite{piga2023hypergraphs} proved that \(\pi_{\text{co}}(Z_\ell^{(3)-}) = 0\) 
for \(\ell \geq 3\), and asked if \(\pi_{\text{co}}(Z_{\ell}^{(k)-}) = 0\) for \(k \geq 4\) and \(\ell\) sufficiently large. We answer this question affirmatively, even without the assumption that \(\ell\) is large.

\begin{theorem} \label{thm:zycle}
    For all \(k \geq 3\) and \(\ell \geq 3\), \(\pi_{\textnormal{co}}(Z_\ell^{(k)-}) = 0\).
\end{theorem}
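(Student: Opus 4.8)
The plan is to prove the equivalent statement that for every $\epsilon>0$ there is an $n_0$ such that every $k$-graph $G$ on $n\ge n_0$ vertices with $\delta_{k-1}(G)\ge\epsilon n$ contains a copy of $Z_\ell^{(k)-}$; since $Z_\ell^{(k)-}$ is fixed, this gives $\pi_{\textnormal{co}}(Z_\ell^{(k)-})=0$. It is convenient to describe $Z_\ell^{(k)}$ through its \emph{blobs}: the $(k-1)$-sets $B_i=\{v^1_i,\dots,v^{k-1}_i\}$, one for each $i\in\{1,\dots,\ell\}$. Writing $N_G(S)=\{w:S\cup\{w\}\in E(G)\}$ for a $(k-1)$-set $S$, the edge set of $Z_\ell^{(k)}$ is exactly the list of conditions $B_{i+1}\subseteq N_G(B_i)$ for all $i$ (indices mod $\ell$), and deleting one edge amounts to allowing $B_\ell$ to dominate only a $(k-2)$-subset of $B_1$. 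So it suffices to find pairwise disjoint $(k-1)$-sets $B_1,\dots,B_\ell$ in $V(G)$ with $B_{i+1}\subseteq N_G(B_i)$ for $1\le i\le\ell-1$ and $|B_1\setminus N_G(B_\ell)|\le 1$. The hypothesis $\delta_{k-1}(G)\ge\epsilon n$ says precisely that $|N_G(S)|\ge\epsilon n$ for every $(k-1)$-set $S$, and this is inherited by links: for a vertex $v$ and a $(k-2)$-set $T$ we have $d_{L_G(v)}(T)=d_G(T\cup\{v\})\ge\epsilon n$, so $\delta_{k-2}(L_G(v))\ge\epsilon n$, and similarly for iterated links.

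The ``path part'' is easy: the chain $B_1\to B_2\to\cdots\to B_{\ell-1}$ can be built greedily, since having chosen $B_1,\dots,B_i$ we have $|N_G(B_i)|\ge\epsilon n\gg(k-1)\ell$ and can take a $(k-1)$-subset $B_{i+1}$ of $N_G(B_i)$ that is disjoint from everything chosen so far. The real work is to close the path: we must find $B_\ell$, disjoint from the rest, with $B_\ell\subseteq N_G(B_{\ell-1})$ and with at least $k-2$ of the vertices of $B_1$ lying in $N_G(B_\ell)$. Fixing a $(k-2)$-subset $B_1'\subseteq B_1$, the latter says $B_\ell$ is an edge of the link-intersection $H:=\bigcap_{u\in B_1'}L_G(u)$, so what we need is that $N_G(B_{\ell-1})$ contains an edge of $H$. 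An averaging argument lets us choose $B_1'$ so that $H$ is dense: for a random $(k-2)$-set $B_1'$ one has $\mathbb{E}\,e(H)=\sum_S\binom{|N_G(S)|}{k-2}\big/\binom{n}{k-2}\ge(1-o(1))\epsilon^{k-2}\binom{n}{k-1}$, the sum being over all $(k-1)$-subsets $S$ of $V(G)$, so some $B_1'$ yields $e(H)\ge\tfrac12\epsilon^{k-2}\binom{n}{k-1}$ (and we then extend $B_1'$ to a blob $B_1$ by adjoining one further vertex). However, denseness of $H$ alone does not force an edge of $H$ to lie inside the specific set $N_G(B_{\ell-1})$. To bridge this I would choose $B_1$ (hence $B_1'$ and $H$) together with the first and last few blobs rather than sequentially, keeping the relevant neighbourhoods and link graphs aligned, and then run a counting/dependent-random-choice argument that exploits the fact that in $G$ \emph{every} $(k-1)$-set, not merely a typical one, has a neighbourhood of size at least $\epsilon n$. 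An essentially equivalent alternative is to grow a long domination path, reserve an interior segment of it as a ``connector'', and splice the two ends using the robustness of the uniform codegree condition. In either version the single deleted edge is what makes the argument work for every $\ell\ge3$ (including $\ell=3,4$), since it only requires $B_\ell$ to dominate $k-2$ of the $k-1$ vertices of $B_1$.

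I expect this closing step to be the main obstacle. Everything before it is routine greedy extension, whereas guaranteeing that the domination path can ``return'' to $B_1$ requires more than a density bound on the auxiliary link graphs: one has to exclude the adversarial possibility that the neighbourhoods of all $(k-1)$-sets reachable from $B_1$ systematically avoid where the edges of $H$ sit, and it is the uniform minimum codegree of $G$ — rather than any averaged version of it — that rules this out. Carrying this out cleanly, in a way that works uniformly in $k$ and $\ell$, is the heart of the proof; specialising to $k=3$ recovers the theorem of Piga and Sch{\"u}lke on $Z_\ell^{(3)-}$.
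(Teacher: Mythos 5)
Your reduction of the problem and the greedy construction of the path $B_1\rhd B_2\rhd\cdots\rhd B_{\ell-1}$ do match the easy part of the paper's argument, but the closing step --- which you correctly flag as the heart of the proof --- is left as a gap, and the sketched remedies (choosing the first and last blobs ``together'', dependent random choice, a ``connector'' segment) do not supply the missing mechanism. The paper closes the cycle with two concrete ideas, neither of which appears in your proposal. First, \emph{before} building anything, it produces a bounded family of $(k-2)$-sets $f_1,\dots,f_m$ (with $m=m(\varepsilon)$) such that all but fewer than $\binom{\varepsilon n}{k-1}$ of the $(k-1)$-sets of $V(G)$ dominate some $f_i$; this is a greedy covering argument using only the codegree condition. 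Consequently, whatever the penultimate blob $e_{\ell-2}$ turns out to be, the family $N(e_{\ell-2})^{(k-1)}$ has at least $\binom{\varepsilon n}{k-1}$ members and hence contains some $e_{\ell-1}$ whose neighbourhood contains one of the $f_i$ --- so the quantifier is reversed relative to your plan: instead of committing to one $(k-2)$-set $B_1'$ and hoping its link-intersection meets $N(B_{\ell-1})$, one prepares boundedly many candidates so that \emph{every} large family of $(k-1)$-sets must hit one of them. Second, the starting blob is not arbitrary: $e_1$ is chosen so that for \emph{every} $i\le m$ there is a vertex $w$ with $f_i\cup\{w\}$ dominating $e_1$. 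This is where the real work lies: the set $\mathcal{N}_i$ of blobs dominated by some $f_iw$ is shown to be ``good'' (it contains all $(k-1)$-subsets of each of boundedly many disjoint linear-sized sets covering almost all of $V$), and a lemma that intersections of good sets are again good, hence nonempty, produces such an $e_1$. The cycle then closes as $e_1\rhd e_2\rhd\cdots\rhd e_{\ell-1}\rhd f_iw\rhd e_1$, with the single possibly absent edge being $e_{\ell-1}\cup\{w\}$.

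Without these two ingredients your argument does not go through: a dense link-intersection $H$ attached to a fixed $B_1'$ may indeed be entirely disjoint from $N(B_{\ell-1})^{(k-1)}$ (the adversarial scenario you describe), and nothing in your sketch excludes it; ``dependent random choice'' is named but not instantiated. A further, smaller point: the paper works throughout with homomorphic copies, using $\pi_{\textnormal{co}}(F)=\pi_{\textnormal{co}}^{\textnormal{hom}}(F)$ (Lemma \ref{lem:supersaturation}), so disjointness of the blobs never has to be enforced; your insistence on pairwise disjoint $B_i$ makes even the final splicing step harder (the closing vertex $w$ and the set $f_i$ could collide with earlier blobs), and your greedy disjointness argument does not cover that. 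So the proposal identifies the right difficulty but does not resolve it, and the resolution in the paper (the bounded dominating family $f_1,\dots,f_m$ plus the good-sets intersection machinery used to pick a start compatible with all of them simultaneously) is genuinely absent from your outline.
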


The rest of the paper is organised as follows. In the remainder of this section, we specify our notation and state a result of Mubayi and Zhao which will be useful to us (Lemma \ref{lem:supersaturation}). In Section 2, we prove Theorem \ref{thm:sufficientCondition}. In Section 3, we apply Theorem \ref{thm:sufficientCondition} to prove one direction of Theorem \ref{thm:tightCycle}, and provide constructions which prove the other direction. In Section 4, we prove Theorem \ref{thm:zycle}. 

\subsection{Notation and a Useful Result}

    Given a \(k\)-graph \(G = (V, E)\) and \(e \in V^{(k-1)}\), we define the \textit{neighbourhood} of \(e\) as 
    \begin{equation*}
        N(e) = \bigl\{v \in V : e \cup \{v\} \in E\bigr\}.
    \end{equation*}
    Given a subset \(S\) of \(V\), we define the \textit{back neighbourhood} of \(S\) as
    \begin{equation*}
        \overleftharpoon{N}(S) = \bigl\{e \in V^{(k-1)} : e \cup \{v\} \in E \text{ for every } v \in S\bigr\}.
    \end{equation*}
    
    If \(f \in V^{(k-2)}\) and \(v \in V \setminus f\), then we write \(fv\) to denote \(f \cup \{v\} \in V^{(k-1)}\). If \(e_1, e_2 \in V^{(k-1)}\), we write \(e_1 \rhd e_2\) if \(e_2 \in N(e_1)^{(k - 1)}\).  Thus
    a zycle \(Z_\ell^{(k)}\) is a sequence of pairwise disjoint \((k-1)\)-sets \(e_1, \ldots, e_\ell\) such that
    \(e_1 \rhd \cdots \rhd e_\ell \rhd e_1\).

Given a \(k\)-graph \(F\), define \(\text{ex}_{\text{co}}^\text{hom}(n, F)\) to be the largest possible value of \(\delta_{k-1}(G)\) when \(G\) is an \(n\)-vertex \(k\)-graph such that there is no homomorphism \(F \to G\). Similarly to before, we define 
\begin{equation*}
    \pi_{\text{co}}^{\text{hom}}(F) = \lim_{n \to \infty} \frac{\text{ex}_\text{co}^{\text{hom}}(n, F)}{n}.
\end{equation*}

 Using the supersaturation phenomenon for codegree Tur{\'a}n density of Mubayi and Zhao \cite{mubayi2007co}, it can be shown that this limit exists, and moreover that \(\pi_{\textnormal{co}}(F)  = \pi_{\textnormal{co}}^{\textnormal{hom}}(F)\) for any \(k\)-graph \(F\).

\begin{lemma} \label{lem:supersaturation}
    For any \(k\)-graph \(F\), \(\pi_{\textnormal{co}}(F)  = \pi_{\textnormal{co}}^{\textnormal{hom}}(F)\).
\end{lemma}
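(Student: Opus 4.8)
The easy inequality $\pi_{\text{co}}^{\text{hom}}(F)\le\pi_{\text{co}}(F)$ holds for a trivial reason: if $G$ contains a copy of $F$ then the inclusion is a homomorphism $F\to G$, so every $k$-graph admitting no homomorphism from $F$ contains no copy of $F$, whence $\text{ex}_{\text{co}}^{\text{hom}}(n,F)\le\text{ex}_{\text{co}}(n,F)$ for all $n$. It is worth recording the following equivalent description before turning to the other direction. A homomorphism $F\to G$ is injective on every edge (edges of $G$ have $k$ distinct vertices), so its image is a copy of the quotient of $F$ obtained by identifying the fibres, and each fibre is an independent set of the $2$-shadow of $F$; conversely an embedded copy of such a quotient yields a homomorphism. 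Thus, writing $\widehat{F}$ for the (finite) family of all such quotients of $F$, the property ``$F$ does not map homomorphically to $G$'' is the same as ``$G$ contains no member of $\widehat{F}$'', so $\text{ex}_{\text{co}}^{\text{hom}}(n,F)=\text{ex}_{\text{co}}(n,\widehat{F})$. Since the Mubayi--Zhao averaging argument establishing existence of $\pi_{\text{co}}(\cdot)$ applies verbatim to finite families, the limit defining $\pi_{\text{co}}^{\text{hom}}(F)$ exists and equals $\pi_{\text{co}}(\widehat{F})$.

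For the reverse inequality $\pi_{\text{co}}(F)\le\pi_{\text{co}}^{\text{hom}}(F)$, fix $\epsilon>0$; it suffices to show that every sufficiently large $n$-vertex $k$-graph $G$ with $\delta_{k-1}(G)\ge(\pi_{\text{co}}^{\text{hom}}(F)+\epsilon)n$ already contains a genuine copy of $F$, for then $\text{ex}_{\text{co}}(n,F)<(\pi_{\text{co}}^{\text{hom}}(F)+\epsilon)n$ eventually and letting $\epsilon\to0$ gives the claim. The plan is to apply the supersaturation phenomenon of Mubayi and Zhao to the family $\widehat{F}$: as $\delta_{k-1}(G)\ge(\pi_{\text{co}}(\widehat{F})+\epsilon)n$, one aims to conclude that $G$ contains at least $\delta\, n^{|V(F)|}$ homomorphic copies of $F$ (equivalently, homomorphisms $F\to G$). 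Since at most $\binom{|V(F)|}{2}n^{|V(F)|-1}=o(n^{|V(F)|})$ of the maps $V(F)\to V(G)$ fail to be injective, for $n$ large at least one of these homomorphisms is an embedding, i.e.\ a copy of $F$.

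The delicate point -- and the step I expect to be the real obstacle -- is deducing that $G$ contains $\Omega(n^{|V(F)|})$ homomorphic copies of $F$, rather than merely many copies of some strictly smaller member of $\widehat{F}$. The route I would pursue is to run the Mubayi--Zhao sampling lemma to pass to $\Omega\!\left(\binom{n}{m}\right)$ many $m$-subsets $S\subseteq V(G)$ with $\delta_{k-1}(G[S])\ge(\pi_{\text{co}}^{\text{hom}}(F)+\tfrac{\epsilon}{2})m$ (so each $G[S]$ contains a homomorphic copy of $F$), and then to upgrade a homomorphic copy into a genuine one by ``un-identifying'' a single fibre at a time: given an embedded copy of a quotient and a fibre $\{u,u'\}$, one re-routes $u'$ to a fresh vertex lying in the intersection of the at most $\Delta(F)$ neighbourhoods $N(e)$ prescribed by the edges of $F$ through $u'$, each of size $\ge(\pi_{\text{co}}^{\text{hom}}(F)+\epsilon)n$. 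One such neighbourhood is large, but the intersection of several can be small when $\pi_{\text{co}}^{\text{hom}}(F)$ is small, so one must exploit the abundance of (vertex-disjoint) homomorphic copies produced above -- there are $\Omega(n)$ of them, hence $\Omega(n)$ of one fixed quotient type -- and/or induct on $|V(F)|$, using that smaller members $F'\in\widehat{F}$ satisfy $\pi_{\text{co}}(F')=\pi_{\text{co}}^{\text{hom}}(F')\ge\pi_{\text{co}}^{\text{hom}}(F)$ (since $\widehat{F'}\subseteq\widehat{F}$), which makes a stronger supersaturation estimate available for them. Once a single splitting step is available, iterating it resolves the quotient completely and produces the desired copy of $F$; the remaining ingredients -- the trivial inequality, the reduction to the finite family $\widehat{F}$, and the negligibility of non-injective maps -- are routine.
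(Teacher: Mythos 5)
The paper does not actually prove this lemma; it simply cites the supersaturation phenomenon of Mubayi and Zhao, so the comparison has to be with the standard argument that citation stands for. Your easy direction, the reduction to the finite family $\widehat{F}$ of quotients, and the observation that non-injective maps $V(F)\to V(G)$ number only $O(n^{|V(F)|-1})$ are all correct. But the heart of the reverse inequality is exactly the step you flag as ``the real obstacle,'' and your proposal does not close it. The mechanism you suggest --- fixing one homomorphic copy and ``un-identifying'' a fibre by re-routing a duplicated vertex into the intersection of the relevant neighbourhoods $N(e)$ --- genuinely fails in the regime this paper cares about: when $\pi_{\textnormal{co}}^{\textnormal{hom}}(F)=0$ each $N(e)$ is only guaranteed to have size $\varepsilon n$, and the intersection of even two such neighbourhoods may be empty, so no single copy can be repaired locally. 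The fallback you offer (``exploit the abundance of vertex-disjoint copies and/or induct on $|V(F)|$'') is a statement of intent rather than an argument; as written there is no proof that $G$ contains $\Omega(n^{|V(F)|})$ homomorphisms $F\to G$, which is what your final counting step needs.

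The standard route (and the one implicit in the Mubayi--Zhao citation) avoids repairing a single copy altogether. First, random sampling with the codegree condition (Chernoff) shows that a positive proportion of the $M$-subsets $S\subseteq V(G)$, for a suitable constant $M$, satisfy $\delta_{k-1}(G[S])\ge(\pi_{\textnormal{co}}^{\textnormal{hom}}(F)+\tfrac{\varepsilon}{2})M$, hence each such $G[S]$ contains a copy of some member of $\widehat{F}$. Pigeonholing over the finitely many quotient types, and noting that a fixed copy of a quotient $F'$ lies in at most $\binom{n-|V(F')|}{M-|V(F')|}$ of the $M$-sets, one gets $\Omega\bigl(n^{|V(F')|}\bigr)$ copies of a single quotient $F'$ in $G$. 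Finally, the Erd\H{o}s--Simonovits supersaturation-to-blow-up argument (a hypergraph K\H{o}v\'ari--S\'os--Tur\'an-type count, needing no codegree hypothesis at all) converts $\Omega\bigl(n^{|V(F')|}\bigr)$ copies of $F'$ into one copy of the blow-up $F'(|V(F)|)$, and $F$ embeds into the blow-up of any of its quotients because a homomorphism is injective on every edge. This yields a genuine copy of $F$, completing $\pi_{\textnormal{co}}(F)\le\pi_{\textnormal{co}}^{\textnormal{hom}}(F)$. Until your proposal replaces the neighbourhood-intersection step with an argument of this kind (or some other complete derivation of $\Omega(n^{|V(F)|})$ homomorphisms), it has a genuine gap at the decisive point.
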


\section{A Sufficient Condition for Zero Codegree Density}
In this short section we prove Theorem \ref{thm:sufficientCondition}, the approach for which is based on an idea in \cite{ding20243}. 

\begin{proof}[Proof of Theorem \ref{thm:sufficientCondition}]
    Fix \(\varepsilon > 0\), and suppose that \(n\) is sufficiently large in terms of \(\varepsilon\). Let \(G = (V, E)\) be a \(k\)-graph with \(|V| = n\) and \(\delta_{k-1}(G) \geq \varepsilon n\).
    By Lemma \ref{lem:supersaturation}, it suffices to find a homomorphism \(F \to G\).
    
    Write \(|V_1| = a\), \(|V_2| = b\), and write \(V_1 = \{w_1, \ldots, w_a\}\). Impose an arbitrary total ordering \(<\) on the vertex set \(V(G)\), thereby making \(G\) an ordered \(k\)-graph.
    
    We assign colours to edges of the complete \(b\)-graph on vertex set \(V(G)\) as follows. The edge \(U \in V(G)^{(b)}\) can be coloured 
    \(i\) if there is no vertex \(x \in V(G)\) such that \(L_G^+(x)[U]\)  contains \(L_F(w_i)[V_2]\) as an ordered subgraph. If an edge can be assigned multiple colours, we pick one arbitrarily.
     
    We assume for sake of contradiction that every edge is assigned a colour. Let \(n_0\) be a large integer divisible by \(k - 1\), to be chosen later. If \(n\) is large enough, then by Ramsey's theorem there is a monochromatic clique of size 
    \(n_0\), say of colour \(i\) and on vertices \(V'\). 
    
    Let \(s\) be the integer such that \((k - 1)s = n_0\). Let \(H\) be the copy of \(K_{s < \ldots < s}^{(k - 1)}\) 
    on the vertex set \(V'\). Note that
    \begin{equation*}
        \sum_{x \in V(G)} e(L_{G}^+(x)[V'] \cap H) = \sum_{y_1 \ldots y_{k -1} \in E(H)} d_G(y_1 \ldots y_{k - 1}) \geq e(H) \cdot \varepsilon n.
    \end{equation*}
    By averaging, there exists a vertex \(x \in V(G)\) such that the edge density of \(L_G^+(x)[V'] \cap H\) is at least \(\varepsilon \cdot e(H) / \binom{|V'|}{k - 1} > \frac{\varepsilon (k - 1)!}{(k - 1)^{k - 1}}\). So 
    if \(n_0\) is chosen sufficiently large, then \(L_{G}^+(x)[V'] \cap H\) has a subgraph \(K_{t, \ldots, t}\), since \(\pi(K_{t, \ldots, t}) = 0\) due to a result of Erd{\H{o}}s \cite{erdos1964extremal}. By the 
    definition of \(H\), this copy of \(K_{t, \ldots, t}\) is also a copy of the ordered \((k - 1)\)-graph \(K_{t < \ldots < t}^{(k - 1)}\) in \(L_G^+(x)[V']\).

    But this is a contradiction, as it was assumed that
    \(L_F(w_i)[V_2]\) is an ordered subgraph of \(K_{t<\ldots<t}^{(k - 1)}\), and so the vertex set of this copy of \(L_F(w_i)[V_2]\) could not have received the colour \(i\).

    So there is an edge which is assigned no colour - let \(U\) be such an edge. Then for each \(1 \leq i \leq a\), there is a vertex \(x_i \in V(G)\) such that 
    \(L_F(w_i)[V_2]\) is an ordered subgraph of \(L_G^+(x_i)[U]\). Hence there is a homomorphism \(F \to G\) - namely, the map that sends \(w_i \mapsto x_i\) and 
    sends \(V_2\) to \(U\) while preserving order.
\end{proof}

\section{The Codegree Density of \texorpdfstring{\(C_\ell^{(k)-}\)}{Tight Cycles minus an Edge}}

In this section, we prove Theorem \ref{thm:tightCycle}. We split this section into the proof that the condition \(\ell \equiv 0, \pm 1 \pmod{k}\) is sufficient, and the proof that it is necessary.

\subsection{\texorpdfstring{\(\ell \equiv 0, \pm 1 \pmod{k}\)}{}}
In this subsection we use Theorem \ref{thm:sufficientCondition} to prove that if \(\ell \equiv 0, \pm 1 \pmod{k}\) and \(\ell \geq k + 2\), then \(\pi_\text{co}(C_\ell^{(k)-}) = 0\).

\begin{proof}[Proof of Theorem \ref{thm:tightCycle} (\(\impliedby\))]
    We firstly note that if \(\ell \equiv 0 \pmod{k}\), then \(C_\ell^{(k)-}\) is \(k\)-partite. A result of Erd{\H{o}}s \cite{erdos1964extremal} then says that \(\pi(C_\ell^{(k)-}) = 0\), and so \(\pi_{\text{co}}(C_\ell^{(k)-})  = 0\).

    Also, if \(\ell \geq 3k - 1\), there is a homomorphism \(C_\ell^{(k)-} \to C_{\ell - k}^{(k)-}\). Indeed, if the vertices of \(C_{\ell - k}^{(k)-}\) are \(v_1, \ldots, v_{\ell - k}\) (in that order) with the missing edge being \(v_{\ell - k} v_1 \ldots v_{k - 1}\), then the cyclic ordering \(v_1, v_2, \ldots, v_k, v_1, v_2, \ldots, v_{\ell - k}\) gives a homomorphic copy of \(C_\ell^{(k)-}\) (with the missing edge being \(v_{\ell - k} v_1 \ldots v_{k - 1}\)). So by Lemma \ref{lem:supersaturation} and induction, it suffices to prove that \(\pi_{\text{co}}(C_{2k - 1}^{(k)-}) = 0\) and \(\pi_{\text{co}}(C_{2k + 1}^{(k)-}) = 0\).

    Firstly, we show that \(C_{2k - 1}^{(k)-}\) satisfies the conditions in Theorem \ref{thm:sufficientCondition}. Suppose that the vertices of \(C_{2k - 1}^{(k)-}\) are \(v_1, v_3, \ldots \allowbreak, v_{2k - 3}, w, v_2, v_4, \ldots, v_{2k - 4}, x\) (in that order) with the missing 
edge being \(w v_2 v_4 \ldots v_{2k - 4} x\). Let \(V_1 = \{w, x\}\) and \(V_2 = \{v_1, \ldots, v_{2k - 3}\}\) with the ordering \(v_1 < v_2 < \ldots < v_{2k - 3}\). Each edge of \(C_{2k - 1}^{(k)-}\) 
contains exactly one vertex from \(V_1\). Consider the ordered complete $(k-1)$-partite \((k -1)\)-graph \(K_{2<\ldots < 2}^{(k - 1)}\) on vertices \(u_1 < \ldots < u_{2k - 2}\). 

Then sending \(v_i \mapsto u_i\) makes \(L(w)[V_2]\) an ordered subgraph of \(K_{2 < \ldots < 2}^{(k - 1)}\); and sending \(v_i \mapsto u_{i+1}\) makes \(L(x)[V_2]\) an ordered subgraph of \(K_{2 < \ldots < 2}^{(k - 1)}\).

Now we show that \(C_{2k + 1}^{(k)-}\) has satisfies the conditions in Theorem \ref{thm:sufficientCondition}. Suppose that the vertices of \(C_{2k + 1}^{(k)-}\) are \(v_1, v_3, \ldots, v_{2k - 1}, w, v_2, v_4, \ldots, v_{2k - 2}, x\) (in that order) with the missing 
edge being \(v_1 v_3 \ldots v_{2k - 1}\). Let \(V_1 = \{w, x\}\) and \(V_2 = \{v_1, \ldots, v_{2k - 1}\}\) with the ordering \(v_1 < \ldots < v_{2k - 1}\). Each edge of \(C_{2k + 1}^{(k)-}\)
contains exactly one vertex from \(V_1\). Consider the ordered complete $(k-1)$-partite \((k - 1)\)-graph \(K_{4 < \ldots < 4}^{(k - 1)}\) on vertices \(u_1' < u_1 < u_2 < u_1'' < \ldots < u_{k - 1}' < u_{2k-3} < u_{2k - 2} < u_{k - 1}''\).

Then sending \(v_i \mapsto u_{i - 1}\) for \(2 \leq i \leq 2k - 1\) and \(v_1 \mapsto u_1'\) makes 
\(L(w)[V_2]\) an ordered subgraph of \(K_{4 < \ldots < 4}^{(k - 1)}\); and sending \(v_i \mapsto u_i\) for \(1 \leq i \leq 2k - 2\) and \(v_{2k - 1} \mapsto u_{k - 1}''\) makes \(L(x)[V_2]\) an ordered subgraph of \(K_{4 < \ldots < 4}^{(k - 1)}\).

Hence by Theorem \ref{thm:sufficientCondition}, \(\pi_{\text{co}}(C_{2k - 1}^{(k)-}) = \pi_{\text{co}}(C_{2k + 1}^{(k)-}) = 0\) as required.
\end{proof}

\subsection{\texorpdfstring{\(\ell \nequiv 0, \pm 1 \pmod{k}\)}{}}

Now we prove that if \(\ell \nequiv 0, \pm 1 \pmod{k}\), then \(\pi_\text{co}(C_\ell^{(k)-}) > 0\). To do this, we construct a \(k\)-graph \(G\) of linear minimum codegree such that any copy of \(C_\ell^{(k)-}\) must contain a certain type of `bad' edge. Since there are not too many `bad' edges, we can delete them, and add new edges to restore the linear minimum codegree condition, while avoiding introducing any copies of \(C_\ell^{(k)-}\).

\begin{proof}[Proof of Theorem \ref{thm:tightCycle} (\(\implies\))]
Given integers \(m\) and \(N\), let \(G_{m, N}\) be the \(k\)-graph defined as follows. The vertex set \(V(G_{m, N})\) is the union of pairwise disjoint sets \(V_{1}, \ldots, V_{m}\)  of size \(N\), and 
we define the function \(f : V(G_{m, N}) \to \mathbb{Z}/m\mathbb{Z}\) such that \(v \in V_{f(v)}\). (Note that here and throughout we simply write \(x\) for \(x + m\mathbb{Z}\).) The edges are then given by \(v_1 \ldots v_k \in E(G_{m, N})\) if and only if \(f(v_1) + \ldots + f(v_k) = 1\).

Note that \(|V(G_{m, N})| = mN\) and \(\delta_{k - 1}(G_{m, N}) \geq N - k + 1\).

Suppose that the vertices \(v_1, \ldots, v_\ell\) (in that order) form a copy of \(C_\ell^{(k)-}\) in \(G_{m, N}\), with the missing edge being \(v_\ell v_1 \ldots v_{k - 1}\). Note that if \(v_i \ldots v_{i + k -1}\) and \(v_{i + 1} \ldots v_{i + k}\) (with indices taken modulo \(\ell\)) are both edges in \(G_{m, N}\), then \(f(v_i) = f(v_{i + k})\). Hence we obtain the following equation, with the indices taken modulo \(\ell\):
\begin{equation*} \tag{\dag}
    f(v_i) = f(v_{i + k}), \text{ for } i \nequiv  0, -1 \pmod{\ell}.
\end{equation*}
    
Firstly suppose that \(d = \text{gcd}(k, \ell) > 1\). Write \(h = \ell / d\). Then, if \(-1 \leq i \leq d - 2\), then none of \(i + k, i + 2k, \ldots, i + (h - 1)k\) are congruent to \(-1, 0, \ldots, d - 2 \pmod{\ell}\). In particular, they are not congruent to \(-1, 0 \pmod{\ell}\), so by (\(\dag\)) we obtain

\begin{equation*}
             f(v_{i + k}) = f(v_{i + 2k}) = \cdots = f(v_{i + (h-1)k}) = f(v_i).
    \end{equation*}
    
    Hence if \(j \equiv i \pmod{d}\), then \(f(v_i) = f(v_j)\).

    Put \(m = \frac{k}{d}\). Note that \(k \neq d\) because \(k \nmid \ell\), so \(m > 1\). Then \(f(v_1) + \ldots + f(v_k) = \frac{k}{d} (f(v_1) + \ldots + f(v_d)) \neq 1\), whence \(v_1 \ldots v_k \notin G_{m, N}\). However, \(v_1 \ldots v_k\) was an edge in our putative copy of \(C_\ell^{(k)-}\), a contradiction. So \(C_\ell^{(k)-}\) is not a subgraph of \(G_{m, N}\) for any \(N\), and so \(\pi_{\text{co}}(C_\ell^{(k)-}) \geq 1/m > 0\).
    
    For the remainder of the proof, we suppose that \(\text{gcd}(k, \ell) = 1\).
    
    We say that a set of vertices \(\{w_1, \ldots, w_q\}\) is \textit{of the form \(a_1^{\alpha_1} \ldots a_p^{\alpha_p}\)} if exactly \(\alpha_i\) of \(f(w_1), \ldots, f(w_q)\) are equal to \(a_i\) for each \(1 \leq i \leq p\).

    \renewcommand{\qedsymbol}{\ensuremath{\blacksquare}}

    \begin{claim} \label{claim:tightCycle1}
        There exist integers \(t, s \geq 2\) (independent of \(m\)) such that the following holds. If the vertices \(v_1, \ldots, v_\ell\) form a copy of \(C_\ell^{(k)-}\) in \(G_{m, N}\) (with missing edge \(v_\ell v_1 \ldots v_{k-1}\)), then there exist \(a, b \in \mathbb{Z}/m\mathbb{Z}\) such that the edge \(v_1 \ldots v_k\) is of the form \(a^t b^s\). (Note then that \(t + s = k\) and \(ta + sb = 1\).)
    \end{claim}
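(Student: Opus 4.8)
The plan is to read the constraint $(\dag)$ as a statement about the cyclic structure that $i \mapsto i+k$ puts on $\mathbb{Z}/\ell\mathbb{Z}$. Throughout we use the standing assumptions of this part of the proof, $\gcd(k,\ell)=1$ and $\ell \not\equiv 0,\pm 1 \pmod k$; in particular the claim is vacuously true when $k=3$.

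First I would extract the structure forced by $(\dag)$. Since $\gcd(k,\ell)=1$, the map $i \mapsto i+k$ is a single $\ell$-cycle $0 \to k \to 2k \to \cdots \to 0$ on $\mathbb{Z}/\ell\mathbb{Z}$, and $(\dag)$ says $f(v_i)=f(v_{i+k})$ along every step of this cycle except the two steps issuing from $0$ and from $-1$. Deleting those two (distinct) steps breaks the cycle into two paths --- a path $A$ running from $k$ to $-1$, and a path $B$ running from $k-1$ to $0$ --- on each of which $i \mapsto f(v_i)$ is constant, say equal to $a$ on $A$ and to $b$ on $B$. Letting $t$ and $s$ count the indices among $1,\dots,k$ that lie on $A$ and on $B$ respectively, we get $t+s=k$, and since $v_1\cdots v_k$ is an edge we get $ta+sb=1$ in $\mathbb{Z}/m\mathbb{Z}$; thus $v_1\cdots v_k$ has the form $a^t b^s$, and it remains only to identify $t,s$ and check $t,s \ge 2$.

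Second, I would carry out the count. Parametrising the cycle as $0,k,2k,\dots$, the index $j$ sits at position $jr \bmod \ell$, where $r := k^{-1} \bmod \ell$; then $A$ occupies positions $1,\dots,\ell-r$ and $B$ occupies the rest (namely $\ell-r+1,\dots,\ell-1$ together with $0$). For $1 \le j \le k$ one has $jr \bmod \ell \notin \{0,\ell-r\}$ --- position $0$ would force $\ell \mid j$ and position $\ell-r$ would force $j \equiv -1 \pmod \ell$, both impossible since $1 \le j \le k \le \ell-2$ --- so such a $j$ lies on $B$ exactly when $jr \bmod \ell \ge \ell-r$. As $r < \ell$, the interval $(jr,(j+1)r]$ contains a multiple of $\ell$ precisely when $jr \bmod \ell \ge \ell - r$, so a telescoping sum gives $s = \sum_{j=1}^{k}\bigl(\lfloor (j+1)r/\ell\rfloor - \lfloor jr/\ell\rfloor\bigr) = \lfloor (k+1)r/\ell\rfloor$. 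Writing $kr = q\ell+1$ (valid since $kr \equiv 1 \pmod \ell$ and $kr \ge 3 > 1$) and using $1 \le r+1 < \ell$, this floor equals $q$; hence $s=q$ and $t=k-q$, which manifestly depend only on $k$ and $\ell$.

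Finally I would bound $q$, which is where $\ell \not\equiv \pm 1 \pmod k$ enters. One always has $q \ge 1$ (since $kr \ge \ell+1$) and $q \le k-1$ (since $r \le \ell-2$, using $\ell\ge k+2$, gives $q=(kr-1)/\ell < k$). If $q=1$ then $kr=\ell+1$, so $\ell \equiv -1 \pmod k$; if $q=k-1$ then $kr=(k-1)\ell+1$, and reducing modulo $k$ gives $\ell \equiv 1 \pmod k$. Both are excluded, so $2 \le q \le k-2$, i.e.\ $t=k-q \ge 2$ and $s=q \ge 2$, as required. This claim is fairly routine; the only point needing care is the telescoping count in the second step, in particular the verification that neither boundary position $0$ nor $\ell-r$ is occupied by an index in $\{1,\dots,k\}$ --- this is exactly what lets the inequality $jr \bmod \ell \ge \ell - r$ detect membership in $B$ with no off-by-one error. (If $a=b$ occurs, all $\ell$ vertices of the copy lie in a single class $V_a$; the stated conclusion, read with $a=b$, still holds.)
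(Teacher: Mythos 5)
Your proof is correct, and its first half coincides with the paper's: cutting the $k$-step cycle on $\mathbb{Z}/\ell\mathbb{Z}$ at the two steps leaving $[0]$ and $[-1]$, and noting that $f$ is constant on each of the two resulting arcs, is exactly the paper's decomposition into the arcs $X$ (from $[k-1]$ to $[0]$) and $Y$ (from $[k]$ to $[-1]$), up to swapping the labels of $a$ and $b$. The difference lies in how $t,s\ge 2$ is established. The paper writes $\ell=qk+r$ with $2\le r\le k-2$ (this is where $\ell\not\equiv 0,\pm 1\pmod k$ enters) and simply exhibits two elements of $\{[1],\ldots,[k]\}$ in each arc, namely $[k-1],[k-1-r]\in X$ and $[k],[k-r]\in Y$; you instead determine $t$ and $s$ exactly, parametrising positions by the inverse of $k$ modulo $\ell$ and telescoping a floor-function indicator to get $s=q'$, where $k\cdot(k^{-1}\bmod\ell)=q'\ell+1$, and then showing that $q'=1$ or $q'=k-1$ would force $\ell\equiv -1$ or $\ell\equiv 1\pmod k$ respectively. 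Your route is more computational but yields the precise values of $t$ and $s$ (which the paper never needs), and the boundary checks you single out (position $0$ and the position of $[-1]$ are unoccupied by the indices $1,\ldots,k$; the inverse is at most $\ell-2$ thanks to $\ell\ge k+2$) are exactly the points requiring care, and they are right. Two small remarks, both consistent with the paper: the degenerate case $a=b$ that you flag parenthetically is excluded in the paper only later (in Claim \ref{claim:tightCycle2}, using $k\mid m$), so treating it as a reading convention here matches the paper; and the vacuity you note for $k=3$ (in fact also for $k=4$, since $\gcd(k,\ell)=1$ then forces $\ell\equiv\pm 1$) is implicit in the paper's use of a remainder lying in $[2,k-2]$.
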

    
    \begin{proof}
        Arrange all the residue classes modulo \(\ell\) in a circle, in the clockwise order \([k], [2k], \ldots, \allowbreak  [\ell k]\). Let \(X\) be the set of residue classes \([x]\) such that either \([x] \neq [0], [-1]\) and \([x]\) lies on the clockwise arc from \([-1]\) to \([0]\), or \([x] = [0]\). Let \(Y\) be the complementary set of residue classes: i.e. \([y] \in Y\) if either \([y] \neq [0], [-1]\) and \([y]\) lies on the clockwise arc from \([0]\) to \([-1]\), or \([y] = [-1]\).
        
        Then by (\(\dag\)), there exist \(a, b \in \mathbb{Z}/m\mathbb{Z}\) such that \(f(v_x) = a\) whenever \([x] \in X\), and \(f(v_y) = b\) whenever \([y] \in Y\).

        Set \(t = \big|\{[1], \ldots, [k]\} \cap X\big|\), \(s = \big|\{[1], \ldots, [k]\} \cap Y\big|\). Then \(v_1 \ldots v_k\) is of the form \(a^t b^s\), so it suffices to show that \(t,s \geq 2\). For this, write \(\ell = qk + r\), where \(2 \leq r \leq k -2\). It then immediately follows that \([k-1], [k-1- r] \in X\), and \([k], [k - r] \in Y\), whence \(t, s \geq 2\).
    \end{proof}

    Our strategy to construct a \(C_\ell^{(k)-}\)-free \(k\)-graph with linear minimum codegree will thus be to remove all edges of the form \(a^t b^s\) from \(G_{m, N}\) whenever \(ta + sb = 1\), and add new edges of the form \(a^{t - 1} b^s c(a, b)\) and \(a^t b^{s - 1} d(a, b)\) to restore the linear minimum codegree condition. In order to avoid introducing any copies of \(C_\ell^{(k)-}\), we need to be slightly careful about how we choose \(c(a,b)\) and \(d(a, b)\). The conditions which we demand are satisfied are conditions (i)-(v) in the following claim.

    \begin{claim} \label{claim:tightCycle2}
         We can choose \(m\) such that the following holds. Suppose \(a, b \in \mathbb{Z}/m\mathbb{Z}\) are such that \(ta + sb = 1\). Then we can choose \(c = c(a,b), d = d(a, b) \in \mathbb{Z}/m\mathbb{Z}\) such that if we write \(a' = a'(a, b) = 1 - ((t - 2)a + sb + c)\), \(b' = b'(a, b) = 1 - ((t - 1)a + (s - 1)b + c)\), \(a'' = a''(a, b) = 1 - ((t - 1)a + (s - 1)b +d)\) and \(b'' = b''(a, b) = 1 - (ta + (s - 2)b + d)\), then the following conditions are satisfied:
         \begin{enumerate}[(i)]
        \item \(a, b, c, a', b'\) are pairwise distinct;
        \item \(a, b, d, a'', b''\) are pairwise distinct;
        \item If \(\{x, y\} \in \{a, b, c, a', b'\}^{(2)} \setminus \{\{a', b'\}, \{a, b\}\}\) or \(x = b, y = a\), then \(tx + sy \neq 1\);
        \item If \(\{x, y\} \in \{a, b, d, a'', b''\}^{(2)} \setminus \{\{a'', b''\}, \{a, b\}\}\) or \(x = b, y = a\), then \(tx + sy \neq 1\); and
        \item \(c \neq d\), \(c \neq a''\), \(d \neq b'\).
    \end{enumerate}
    \end{claim}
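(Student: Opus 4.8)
The plan is to linearise the definitions of $a',b',a'',b''$ and then choose $m$ large before selecting $c$ and then $d$ greedily. Because $ta+sb=1$, expanding the four definitions collapses them to affine-linear expressions:
\[
a'=2a-c,\qquad b'=a+b-c,\qquad a''=a+b-d,\qquad b''=2b-d .
\]
In particular $a'-b'=a-b=b''-a''$, so the requirements ``$a'\ne b'$'' in (i) and ``$a''\ne b''$'' in (ii) are each equivalent to ``$a\ne b$'', and every one of conditions (i)--(v) becomes the demand that $c$ (and, once $c$ is fixed, $d$) avoid the zero set of some form $\lambda X-\mu$ over $\mathbb Z/m\mathbb Z$, drawn from a fixed list whose length depends only on $k$, in which each $\lambda$ is a fixed integer with $|\lambda|=O_k(1)$ and each $\mu$ is a fixed $\mathbb Z$-linear combination of $a$ and $b$. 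The only forms on the list in which the variable does not actually occur (those with $\lambda=0$) are ``$a\ne b$'' and the form coming from $(x,y)=(b,a)$ in (iii) and (iv), namely ``$tb+sa\ne 1$''; subtracting $ta+sb=1$ shows the latter is equivalent to ``$(t-s)(a-b)\ne 0$''.

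I would then choose $m$ so that: (a) both $a\ne b$ and $(t-s)(a-b)\ne 0$ hold automatically for every pair $(a,b)$ with $ta+sb=1$ that can actually arise; and (b) $m$ is large (in particular larger than every $|\lambda|$ on the list). Given (a) and (b), every remaining forbidden form has $\lambda\not\equiv 0\pmod m$, hence excludes at most $\gcd(\lambda,m)\le|\lambda|=O_k(1)$ values of its variable. So the set of forbidden values of $c$ coming from (i), (iii) and the $d$-free parts of (v) has size at most some $C_1=C_1(k)$; pick $c$ outside it. With $c$ fixed, the set of forbidden values of $d$ coming from (ii), (iv), (v) has size at most some $C_2=C_2(k)$; pick $d$ outside it. Thus any $m>C_1+C_2$ satisfying (a) completes the proof.

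For (a): first, $a=b$ would force $ka=1$, which is impossible once $\gcd(k,m)>1$; equivalently, if $\gcd(k,m)>1$ then $G_{m,N}$ has no edge on which $f$ is constant, so the pair $(a,b)$ supplied by Claim~\ref{claim:tightCycle1} genuinely has $a\ne b$ (and in any event the conjunction of $ta+sb=1$ with condition~(iii) applied to $(b,a)$ already forces $a\ne b$). Second, since $\gcd(k,\ell)=1$, a short computation with the cyclic order of residues used in the proof of Claim~\ref{claim:tightCycle1} gives $t\equiv -\ell^{-1}\pmod k$, whence $\gcd(t,k)=\gcd(s,k)=1$ and $\gcd(t-s,k)\mid 2$. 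Therefore, if $k$ is not a power of $2$ I would take $m=pq$ with $p$ an odd prime factor of $k$ and $q$ a large prime; then $t,s,t-s$ are all units mod $m$, so $(t-s)(a-b)\ne 0$ whenever $a\ne b$. If $k$ is a power of $2$ I would instead take $m$ divisible by a sufficiently large power of $2$ (and by a large prime): $t-s$ is then even, but one checks that every nonzero $\delta$ with $(t-s)\delta=0$ in $\mathbb Z/m\mathbb Z$ has $1-t\delta$ odd while $kb$ is even, so $a-b=\delta$ is incompatible with $ta+sb=1$; hence again $(t-s)(a-b)\ne 0$ for all relevant pairs.

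The linearisation and the counting are routine once set up. The one genuinely delicate point is (a): guaranteeing that the two ``parameter-only'' forbidden conditions, $a\ne b$ and $(t-s)(a-b)\ne 0$, hold for \emph{every} admissible pair $(a,b)$. This is what forces $\gcd(k,m)>1$ and makes essential use of $t\ne s$ together with the sharper fact $\gcd(t-s,k)\mid 2$ (with the case $k$ a power of $2$ needing the extra care indicated above).
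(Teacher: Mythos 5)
Your proposal is correct and follows essentially the same strategy as the paper: linearise \(a'=2a-c\), \(b'=a+b-c\), \(a''=a+b-d\), \(b''=2b-d\), observe that every requirement except the two parameter-only ones (\(a\neq b\) and \(tb+sa\neq 1\)) is a non-degenerate linear congruence in \(c\) (then in \(d\)) with coefficients of size \(O_k(1)\), and pick \(c\), then \(d\), greedily once \(m\) is large enough. The only real divergence is how the parameter-only conditions are secured. The paper takes \(m=kp\) with \(p>k\) prime, so that reducing \(ta+sb=1\) modulo \(k\) (using \(s\equiv -t\)) gives at once \(a\neq b\), \(\gcd(t,k)=\gcd(s,k)=1\), \(\gcd(t-s,k)\leq 2\), and \(tb+sa=k(a+b)-1\equiv -1\not\equiv 1\pmod{k}\) since \(k\geq 3\). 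Your choice of \(m\) (an odd prime factor of \(k\) times a large prime, or a large power of \(2\) times a large prime) makes this harder than it needs to be: it forces the case split and the parity argument when \(k\) is a power of \(2\), and it leans on the asserted identity \(t\equiv-\ell^{-1}\pmod{k}\), which you do not prove. That identity is in fact true (one can compute \(t=\lfloor (k+1)h/\ell\rfloor\) with \(h=k^{-1}\bmod \ell\), whence \(t\ell=kh-1\equiv -1\pmod{k}\)), and your parity argument for the power-of-two case does go through, so there is no gap; but both would be unnecessary if you simply required \(k\mid m\), since then all the needed gcd facts follow from the hypothesis \(ta+sb=1\) itself, as in the paper. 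One phrasing slip: condition (v) has no \(d\)-free part (all three constraints involve \(d\)), so it constrains only \(d\) after \(c\) is fixed, which is how you in fact use it.
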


    \begin{proof}
       We choose \(m = kp\) where \(p\) is a prime with \(p > k\) and such that \(m \geq 52.\)

        Using the facts that \(ta + sb = 1\), \(t + s = k\) and \(k \mid m\), we can see that \(a \neq b\). Hence we also have \(a' \neq b'\). So satisfying condition (i) amounts to ensuring that none of eight linear equations in \(\mathbb{Z}/m\mathbb{Z}\) are satisfied, one corresponding to each pair of \(a, b, c, a', b'\) not equal to \(\{a, b\}\) or \(\{a', b'\}\). Since the coefficient of \(c\) in each of these equations is either \(1\) or \(2\), it follows that condition (i) is satisfied for all but at most \(16\) values of \(c\). Similarly, condition (ii) is satisfied for all but \(16\) values of \(d\).

    Note that \(tb + sa = (t + s)(a + b) - (ta + sb) \equiv -1 \pmod{k}\), and so \(tb + sa \not\equiv 1 \pmod{m}\), where we have used the fact that \(k \mid m\) and \(k \geq 3\). So satisfying condition (iii) amounts to ensuring that none of sixteen linear equations in \(\mathbb{Z}/m\mathbb{Z}\) are satisfied, two corresponding to each pair \(\{x, y\} \in \{a, b, c, a', b'\}^{(2)} \setminus \{\{a', b'\}, \{a, b\}\}\). In these equations, the coefficient of $c$ is one of \(t\), \(s\) or \(t - s\). Since \(\text{gcd}(t, k) =\text{gcd}(s, k) = 1\) (because \(ta + bs \equiv 1 \pmod{k}\)), and \(k = t + s\), we have \(\text{gcd}(t - s, k) \leq 2\). By the choice of \(m\), we have \(\text{gcd}(t, m), \text{ gcd}(s, m), \text{ gcd}(t - s, m) \leq 2\). Hence each of the sixteen equations has at most \(2\) solutions. So condition (iii) is satisfied for all but at most \(32\) values of \(c\). Similarly, condition (iv) is satisfied for all but at most \(32\) values of \(d\).

    Since \(m \geq 49\), we can choose \(c\) such that conditions (i) and (iii) are satisfied. In order to satisfy condition (v), we must choose \(d\) such that none of three linear equations in \(\mathbb{Z}/m\mathbb{Z}\) are satisfied. In each of these congruences, the coefficient of $d$ is \(1\); hence condition (v) is satisfied for all but at most \(3\) values of \(d\) (for our particular choice of \(c\)).

    Since \(m \geq 52\), we can choose \(d\) such that conditions (ii), (iv) and (v) are satisfied as well.
    \end{proof}

    Let \(G_{m, N}^t\) be the \(k\)-graph obtained from \(G_{m, N}\) by, for each \(a, b \in \mathbb{Z}/m\mathbb{Z}\) with \(ta + sb = 1\), removing all edges of the form \(a^t b^s\), and adding the edges of the form \(a^{t - 1} b^s c(a, b)\) and \(a^t b^{s - 1} d(a, b)\). We call an edge of \(G_{m, N}^t\) an \textit{old} edge if it is an edge in \(G_{m, N}\); otherwise we call it a \textit{new} edge. Note that an edge \(v_1 \ldots v_k \in G_{m, N}^{t}\) is an old edge if and only if \(f(v_1) + \ldots + f(v_k) = 1\).
    
    Since \(\delta_{k - 1}(G_{m, N}^t) \geq N - k + 1\), if we can show that \(C_\ell^{(k)-}\) is not a subgraph of \(G_{m, N}^t\) for all \(N\), then it will follow that \(\pi_\text{co}(C_\ell^{(k)-}) \geq 1/m > 0\). 

   Suppose for sake of contradiction that \(G_{m, N}^t\) contains a copy of \(C_\ell^{(k)-}\). By Claim \ref{claim:tightCycle1} and the definition of \(G_{m, N}^t\), any copy of \(C_\ell^{(k)-}\) in \(G_{m, N}^t\) must contain a new edge. By the symmetry of \(t\) and \(s\), let us assume that a copy of \(C_\ell^{(k)-}\) in \(G_{m, N}^t\) contains a new edge of the form \(a^{t - 1} b^s c(a, b)\), where \(ta + sb = 1\). The following claim shows that in any such copy of \(C_\ell^{(k)-}\), the edges have one of three forms.
   
   From now on, \(a\) and \(b\) are fixed, and we write \(c\), \(a'\), \(b'\) and \(d\) to mean \(c(a, b)\), \(a'(a, b)\), \(b'(a, b)\) and \(d(a, b)\), respectively.

    \begin{claim} \label{claim:tightCycle3}
    Suppose \(v_1, \ldots, v_q\) are vertices in \(G_{m, N}^t\) and \(v_i v_{i +1} \ldots v_{i + k - 1}\) is an edge in \(G_{m, N}^t\) for each \(1 \leq i \leq q - k + 1\). Suppose 
    also that the edge \(v_1 \ldots v_k\) is of form \(a^{t - 1} b^s c\). Then for any \(1 \leq i \leq q - k + 1\),
    the edge \(v_i v_{i +1} \ldots v_{i + k - 1}\) either has the form (I) \(a^{t - 1} b^s c\), (II) \(a^{t - 2} b^s c a'\) or (III) \(a^{t - 1} b^{s - 1} c b'\).
    \end{claim}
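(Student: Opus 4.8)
The plan is to induct on $i$, the case $i=1$ being the hypothesis. Before the inductive step I would record a short dictionary of the edges of $G_{m,N}^t$: every edge is either \emph{old} — equivalently, its $f$-values sum to $1$ and it is not of a removed shape $x^t y^s$ with $tx+sy=1$ — or \emph{new} — i.e.\ of shape $x^{t-1}y^s c(x,y)$ or $x^t y^{s-1} d(x,y)$ for some $x,y\in\mathbb Z/m\mathbb Z$ with $tx+sy=1$. Using the definitions of $a'$ and $b'$ one checks that edges of form (II) and (III) have $f$-sum $1$, hence (using $t,s\ge2$ from Claim~\ref{claim:tightCycle1} and the distinctness in Claim~\ref{claim:tightCycle2}(i)) are genuine old edges; an edge of form (I) has $f$-sum $1-a+c\ne1$ since $c\ne a$, hence is new — in fact the (I)-edges are exactly the added edges $a^{t-1}b^s c(a,b)$. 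I would also note that each of (I), (II), (III) contains exactly one vertex of $f$-value $c$, and recall from the proof of Claim~\ref{claim:tightCycle2} that $\gcd(t,k)=\gcd(s,k)=1$ and $m=kp$ with $p>k$ prime, so $\gcd(t,m)=\gcd(s,m)=1$ and $c(x,y),d(x,y)\notin\{x,y\}$ for every valid pair $(x,y)$.

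For the inductive step write $W_i=v_i\cdots v_{i+k-1}$ and $W_{i+1}=v_{i+1}\cdots v_{i+k}$; these share the $k-1$ vertices $v_{i+1},\dots,v_{i+k-1}$, so, setting $e=f(v_i)$ and $e'=f(v_{i+k})$, the multiset of $f$-values of $W_{i+1}$ is that of $W_i$ with one copy of $e$ deleted and one copy of $e'$ inserted (if $v_i=v_{i+k}$ then $W_{i+1}=W_i$ and there is nothing to do). I would then split into cases according to the form of $W_i$ and the value of $e$: in each case the $f$-values of $W_{i+1}$ are determined up to the single unknown $e'$, and the dictionary together with conditions (i)--(v) of Claim~\ref{claim:tightCycle2} forces $e'$ and pushes $W_{i+1}$ into one of the three forms. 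For instance, if $W_i$ has form (I) and $e=a$, then $W_{i+1}$ has $f$-values $a$ (with multiplicity $t-2$), $b$ ($\times s$), $c$ and $e'$: if $W_{i+1}$ is old then $f$-sum $=1$ forces $e'=a'$ and form (II); if $W_{i+1}$ is new then comparing its $f$-values with the two templates and using Claim~\ref{claim:tightCycle2}(iii)--(v) to discard the spurious matches forces $e'=a$ and form (I). The delicate case is $e=c$, the deletion of the unique $c$-vertex: then $W_{i+1}$ cannot be old, since an old edge whose $f$-values have the resulting shape and sum to $1$ would be the removed edge $a^t b^s$; so $W_{i+1}$ is new, and matching its $f$-values against the templates — excluding the pair $(b,a)$ since $tb+sa\equiv-1\not\equiv1\pmod k$, excluding $e'=a$ since $a^t b^s$ is neither an old nor a new edge of $G_{m,N}^t$ (using $c\ne a$, $d\ne b$), and excluding $e'\in\{a',b',b\}$ via Claim~\ref{claim:tightCycle2}(iii) and $\gcd(t,m)=\gcd(s,m)=1$ — leaves only $e'=c$, i.e.\ form (I). The cases where $W_i$ has form (II) or (III) are entirely analogous, the only new feature being that $e$ may equal the lone ``extra'' value $a'$ (resp.\ $b'$); the symmetry between $t$ and $s$ and between the two new-edge templates halves the work.

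The one real difficulty is the bookkeeping in the ``$W_{i+1}$ is new'' subcases: one must check that conditions (i)--(v) of Claim~\ref{claim:tightCycle2}, together with the removed-edge characterization of old edges, rule out \emph{every} unintended multiset match. Each such match forces one of a short list of impossibilities — a relation $tx+sy=1$ with $\{x,y\}$ drawn from $\{a,b,c,a',b'\}$ or $\{a,b,d,a'',b''\}$ (forbidden by (iii)/(iv)), a coincidence $c=d$, $c=a''$ or $d=b'$ (forbidden by (v)), a repetition within one of those five-element sets (forbidden by (i)/(ii)), an equation $c(x,y)\in\{x,y\}$ or $d(x,y)\in\{x,y\}$, an equation $ky'=1$ in $\mathbb Z/m\mathbb Z$ (impossible as $k\mid m$, $k\ge3$), or a relation $t(x-x')=0$ or $s(x-x')=0$ (which forces $x=x'$ as $\gcd(t,m)=\gcd(s,m)=1$) — and these are exactly the situations Claim~\ref{claim:tightCycle2} was designed to avoid. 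I expect this elementary but lengthy case check to be the bulk of the proof.
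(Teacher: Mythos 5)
Your proposal is correct and is essentially the paper's argument: induction along the walk, with old edges rigid because their \(f\)-sum is \(1\) (so the completion of a shared \((k-1)\)-set is forced), and new edges controlled by conditions (i)--(v) of Claim~\ref{claim:tightCycle2} together with \(tb+sa\equiv -1\pmod k\) and the coprimality of \(t,s\) with \(m\). The only real difference is bookkeeping: the paper first pins down the defining pair of any new edge to \((a,b)\) via condition (iii) plus a multiplicity argument that explicitly uses \(k\ge 5\) (equivalently, that \(t=s=2\) cannot occur --- a fact you never record, though in your multiset-matching organisation it can largely be replaced by \(t\neq s\), which follows from \(\gcd(t,k)=1\)), whereas you match multisets case by case after deleting \(f(v_i)\) and inserting \(f(v_{i+k})\), leaving the admittedly lengthy but routine exhaustive check deferred; your worked ``delicate'' case and your list of exclusion mechanisms are accurate.
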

    
    \begin{proof}
         By induction on \(i\). The base case \(i = 1\) is given, so assume \(i > 1\). By the induction hypothesis, the edge
    \(v_{i - 1} \ldots v_{i + k - 2}\) is an edge of the form (I), (II) or (III). 
    \case{1}{\(v_{i} \ldots v_{i + k -1}\) is an old edge: } If \(v_{i - 1} \ldots v_{i + k - 2}\) is of the form (II) or (III), then it is an old edge, since \(f(v_{i-1}) + \ldots + f(v_{i + k -2}) = 1\). Hence 
    any old edge containing \(v_i, \ldots, v_{i + k - 2}\) is again of the form (II) or (III), respectively, as the form of an old edge is uniquely determined by the form of any \((k - 1)\)-subset of it. 

    If \(v_{i - 1} \ldots v_{i + k - 2}\) is of the form (I), then \(\{v_i, \ldots, v_{i + k -2}\}\) is of the form \(a^{t - 1} b^s\), \(a^{t - 2} b^s c\) or \(a^{t - 1} b^{s - 1} c\). There is no old edge 
    containing \(v_i, \ldots, v_{i + k - 2}\) in the first case; and in the second and third cases any old edge containing \(v_i, \ldots v_{i + k - 2}\) is of the form (II) or (III), respectively.  
    
    \case{2}{\(v_i \ldots v_{i + k - 1}\) is a new edge: } In this case, there exist \(x, y \in \mathbb{Z}/m\mathbb{Z}\) such that the edge \(v_i \ldots v_{i + k -1}\) has one of the two forms \(x^{t - 1} y^s z\) or \(x^t y^{s-1} z\), and \(tx + sy = 1\).

    We will firstly show that \(x = a\) and \(y = b\).
    
    To this end, firstly suppose that there are indices \(i \leq j_1, j_2 \leq i + k - 2\) such that \(f(v_{j_1}) = x\), \(f(v_{j_2}) = y\).
    Since \(\{f(v_{j _1}), f(v_{j_2})\} \in \{a, b, c, a', b'\}^{(2)} \setminus \{\{a', b'\}\}\), condition (iii) in
    the definition of \(c\) implies that \( x = \)\(f(v_{j_1}) = a\), \( y = \)\(f(v_{j_2}) = b\). 
    
    Now suppose that there is no index \(i \leq j_1 \leq i + k - 2\) such that \(f(v_{j_1}) = x\). Then \(t = 2\), and exactly \(k - 2\) of \(f(v_i), \ldots, f(v_{i + k - 2})\) are equal to \(y\). Note that \(k \geq 5\) (so that \(k - 2 \geq 3\)), since \(\text{gcd}(\ell, k) = 1\) and \(\ell \nequiv 0, \pm 1 \pmod{k}\). In any of the forms (I), (II) or (III), the only element of \(\mathbb{Z}/m\mathbb{Z}\) appearing with multiplicity at least \(3\) when \(t = 2\) is \(b\). Hence \(y = b\) (and so \(x = a\)).

    Finally, if there is no index \(i \leq j_2 \leq i + k - 2\) such that \(f(v_{j_2}) = y\), then \(s = 2\) and exactly \(k - 2 \geq 3\) of \(f(v_i), \ldots, f(v_{i+k-2})\) are equal to \(x\). When \(s = 2\), the only element of \(\mathbb{Z}/m\mathbb{Z}\) appearing with multiplicity at least \(3\) in any of the forms (I), (II) or (III) is \(a\). It follows that \(x = a\) and \(y = b\).

    Hence in any case we have \(x = a\) and \(y = b\), so \(v_i \ldots v_{i+k-1}\) is either of the form (I) or of the form \(a^t b^{s - 1} d\).
    We will now show that the latter is impossible.
    
    Note that no \((k-1)\)-subset of any edge of the form (I), (II) or (III) has either of the forms \(a^t b^{s - 2} d\) or \(a^t b^{s - 1}\), since \(a\) occurs at most \(t - 1\) times in the forms (I), (II) and (III) (by condition (i) of the definition of \(c\)). Also, no \((k-1)\)-subset of any edge of the form (I), (II) or (III) has the form \(a^{t - 1} b^{s - 1} d\), since \(d \not\in \{b, c, b'\}\) by conditions (ii) and (v) of the definition of \(c\) and \(d\). Hence no new edge containing \(v_i, \ldots, v_{i + k -2}\) is of the form \(a^t b^{s - 1} d\).

    Since \(v_i \ldots v_{i + k -1}\) is either a new edge or an old edge, it follows from the above case analysis that \(v_i \ldots v_{i + k -1}\) is of the form (I), (II) or (III).
    \end{proof}

    \renewcommand{\qedsymbol}{\ensuremath{\square}}

    Now, we define the function \(\mathfrak{f} : V(G_{m, N}^t) \to \{1, 2, 3, 4\}\) such that 

        \begin{equation*}
        \mathfrak{f}(v_i) = \begin{cases*} 1 & if \(f(v_i) \in \{a, a'\}\) \\
            2 & if \(f(v_i) \in \{b, b'\}\) \\
            3 & if \(f(v_i) = c\) \\
            4 & otherwise.
        \end{cases*}
    \end{equation*}
    \(\mathfrak{f}\) is well defined by condition (i) of the definition of \(c\). It follows from Claim \ref{claim:tightCycle3} that in any copy of \(C_\ell^{(k)-}\) in \(G_{m, N}^t\) on vertices \(v_1, \ldots, v_\ell\) (in that order, with missing edge \(v_\ell v_1 \ldots v_{k-1}\)) which contains an edge of the form \(a^{t - 1} b^s c\), we have \(\mathfrak{f}(v_i) = \mathfrak{f}(v_{i + k})\) for 
    \(i \nequiv  0, -1 \pmod{\ell}\), where the indices are modulo \(\ell\). Following the proof of Claim \ref{claim:tightCycle1}, this implies that there are \(x, y \in \{1, 2, 3, 4\}\) such that among \(\mathfrak{f}(v_1), \ldots, \mathfrak{f}(v_k)\) the value \(x\) appears exactly \(t\) times, and \(y\) appears exactly \(s\) times.

    But this is a contradiction as this is not true for any of the three types of edges in Claim~\ref{claim:tightCycle3} (since \(t, s \geq 2\)). Thus \(G_{m, N}^{t}\) contains no copy of \(C_\ell^{(k)-}\), and hence \(\pi_\text{co}(C_\ell^{(k)-}) > 0\).
\end{proof}

\section{The Codegree Density of \texorpdfstring{\(Z_\ell^{(k)-}\)}{}}

\subsection{Preliminaries}

In order to prove Theorem \ref{thm:zycle} we introduce the following 
notion of a good set, which will be useful to us since they behave nicely under taking intersections.

\begin{definition} \label{def:goodSet}
    Let \(V\) be a set, and let \(\mu, \eta > 0\) be real numbers. A subset \(A \subset V^{(k-1)}\) is \textit{\((\mu, \eta)\)-good} if there exists \(t \in \mathbb{N}\) and pairwise disjoint subsets
    \(A_1, \ldots, A_t\) of \(V\) such that:
    \begin{enumerate}[(i)]
        \item \(A \supset \dot\bigcup_{i = 1}^{t} A_i^{(k - 1)}\),
        \item \(|A_i| \geq \mu |V|\) for each \(1 \leq i \leq t\),
        \item \(|V \setminus \dot\bigcup_{i=1}^{t} A_i| \leq \eta |V|\).
    \end{enumerate}
    In this case, we say any such collection of subsets \(A_1, \ldots, A_t\) \textit{corresponds} to \(A\). 
\end{definition}

The following lemma shows that the intersection of two good sets is again good, but with slightly worse parameters. This lemma allows us to prove Corollary \ref{cor:intersectionOfGoodSets}: that a finite intersection of good sets is nonempty, provided that their \(\eta\) parameters are small enough and that \(|V|\) is sufficiently large.

\begin{lemma} \label{lem:twoGoodSets}
    Suppose \(\mu, \eta > 0\), and write \(\eta' = 3\eta\). 
    Then there exists \(\mu' > 0\) such that if \(A\) and \(B\) are \((\mu, \eta\))-good subsets of \(V^{(k-1)}\), then
     \(A \cap B\) is a \((\mu', \eta')\)-good subset of \(V^{(k-1)}\).
\end{lemma}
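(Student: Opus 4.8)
The plan is to construct the collection witnessing that $A\cap B$ is good directly from the collections witnessing that $A$ and $B$ are good, by taking pairwise intersections. Write $A_1,\ldots,A_s$ for a collection corresponding to $A$ and $B_1,\ldots,B_t$ for one corresponding to $B$, as in Definition \ref{def:goodSet}. Since the $A_i$ are pairwise disjoint and each has size at least $\mu|V|$, we have $s\le 1/\mu$, and likewise $t\le 1/\mu$. I would then set $\mu' = \eta\mu^2$ and take the candidate collection for $A\cap B$ to be all the sets $A_i\cap B_j$ (over $1\le i\le s$, $1\le j\le t$) of size at least $\mu'|V|$.

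First I would verify conditions (i) and (ii) of Definition \ref{def:goodSet} for this collection, which are essentially immediate: the sets $A_i\cap B_j$ are pairwise disjoint because the $A_i$ are and the $B_j$ are; each retained one has size at least $\mu'|V|$ by construction; and since $(A_i\cap B_j)^{(k-1)} = A_i^{(k-1)}\cap B_j^{(k-1)} \subseteq A\cap B$, the set $A\cap B$ contains the disjoint union $\dot\bigcup (A_i\cap B_j)^{(k-1)}$ over the retained pairs.

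The only step with any real content is condition (iii), bounding the uncovered part of $V$, and I would handle it in two moves. The union of \emph{all} the pairwise intersections is $\bigcup_{i,j}(A_i\cap B_j) = \bigl(\bigcup_i A_i\bigr)\cap\bigl(\bigcup_j B_j\bigr)$, whose complement in $V$ is contained in $(V\setminus\bigcup_i A_i)\cup(V\setminus\bigcup_j B_j)$ and hence has size at most $2\eta|V|$. Discarding the intersections of size below $\mu'|V|$ removes at most $st\le 1/\mu^2$ further sets, each contributing fewer than $\mu'|V|$ vertices, so this costs at most $\mu'|V|/\mu^2 = \eta|V|$ more. Together these give an uncovered set of size at most $3\eta|V| = \eta'|V|$, as required. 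The choice $\mu'=\eta\mu^2$ is exactly what makes this count close, and notably no largeness hypothesis on $|V|$ is needed for this lemma on its own.
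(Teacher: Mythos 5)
Your proposal is correct and matches the paper's proof essentially verbatim: the same choice $\mu'=\eta\mu^2$, the same collection of pairwise intersections $A_i\cap B_j$ retained when they have size at least $\mu'|V|$, and the same three-term bound $\eta|V|+\eta|V|+st\,\mu'|V|\le 3\eta|V|$ using $s,t\le\mu^{-1}$. Nothing further is needed.
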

\begin{proof}
    Let \(A_1, \ldots, A_t\) and \(B_1, \ldots, B_s\) be collections of subsets of \(V\)
    corresponding to \(A\) and \(B\), respectively, and put \(\mu' = \eta \mu^2\).
    Write
    \begin{equation*}
        I = \{(i, j) \in [t] \times [s] : |A_i \cap B_j| \geq \mu' |V|\},
    \end{equation*}
    Note that we have the following equality of sets
    \begin{equation*}
        \bigl(\dot\bigcup_{(i, j) \in I} (A_i \cap B_j)\bigr) = \bigl(\dot\bigcup_{i = 1}^{t} A_i\bigr) \cap \bigl(\dot\bigcup_{j = 1}^{s} B_j\bigr) \setminus \bigl(\dot\bigcup_{(i, j) \in ([t] \times [s])\setminus I} (A_i \cap B_j)\bigr),
    \end{equation*}
    and so
    \begin{equation*}
        \bigl(V \setminus \dot\bigcup_{(i, j) \in I} (A_i \cap B_j)\bigr) = \bigl(V \setminus \dot\bigcup_{i=1}^{t} A_i\bigr) \cup \bigl( V \setminus \dot\bigcup_{j=1}^{s} B_j\bigr) \cup \bigl( \dot\bigcup_{(i,j) \in ([t] \times [s]) \setminus I} (A_i \cap B_j) \bigr).
    \end{equation*}
    
    Using this equality of sets, we have
    \begin{equation*}
        \begin{aligned}
        |V \setminus \dot\bigcup_{(i, j) \in I} (A_i \cap B_j)|  
        & \leq |V \setminus \dot\bigcup_{i=1}^{t} A_i| + |V \setminus \dot\bigcup_{j=1}^{s} B_j| + \sum_{(i,j) \in ([t] \times [s]) \setminus I} |A_i \cap B_j| \\
            & \leq \eta |V| + \eta |V| + ts\mu' |V| \\
            & \leq \eta' |V|,
        \end{aligned}
    \end{equation*}
    where the last inequality holds because \(t, s \leq \mu^{-1}\), and hence \(ts \mu' \leq \eta\).
    Since \((A_i \cap B_j)^{(k-1)} \subset A \cap B\) for every \(1 \leq i \leq t\), \(1 \leq j \leq s\), we see that \(A \cap B\) is a \((\mu', \eta')\)-good subset of \(V^{(k-1)}\) with the corresponding collection of subsets \(\{A_i \cap B_j\}_{(i, j) \in I}\).
\end{proof}

\begin{corollary} \label{cor:intersectionOfGoodSets}
    Suppose \(\mu > 0\), \(m \in \mathbb{N}\), and write \(\eta = 3^{-\lceil\log_2(m)\rceil-1}\). Then there exists an integer \(n_0\) such that if \(S_1, \ldots, S_m\) are \((\mu, \eta)\)-good subsets of \(V^{(k-1)}\) and \(|V| > n_0\), then 
    \(\bigcap_{i = 1}^{m} S_i\) is nonempty.
\end{corollary}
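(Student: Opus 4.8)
The plan is to iterate Lemma~\ref{lem:twoGoodSets}, arranging the $m$ sets in a balanced binary tree of pairwise intersections: after $\lceil\log_2 m\rceil$ rounds the ``bad'' parameter $\eta$ has been multiplied by $3$ exactly $\lceil\log_2 m\rceil$ times, growing from $3^{-\lceil\log_2 m\rceil-1}$ to $\frac13<1$, and a good set with bad-parameter less than $1$ is forced to be nonempty once $|V|$ is large.

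Write $r=\lceil\log_2 m\rceil$, so that $1\le m\le 2^r$, and keep $\mu>0$ and $\eta=3^{-r-1}$ fixed. First I would record a trivial monotonicity property of Definition~\ref{def:goodSet}: if $A\subseteq V^{(k-1)}$ is $(\mu,\eta)$-good, then it is also $(\mu'',\eta'')$-good for every $0<\mu''\le\mu$ and $\eta''\ge\eta$, since conditions (i)--(iii) only become easier to satisfy (the same collection of parts witnesses it). Then I would prove, by induction on $j=0,1,\dots,r$, the claim that there is a constant $\mu_j\in(0,\mu]$, depending only on $\mu$, $\eta$ and $j$, such that the intersection of \emph{any} collection of between $1$ and $2^j$ many $(\mu,\eta)$-good subsets of $V^{(k-1)}$ is $(\mu_j,3^j\eta)$-good. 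The base case $j=0$ holds with $\mu_0=\mu$. For the inductive step, suppose we are given $n$ such sets with $1\le n\le 2^{j+1}$. If $n\le 2^j$, the inductive hypothesis together with monotonicity gives that the intersection is $(\mu_{j+1},3^{j+1}\eta)$-good for any $\mu_{j+1}\le\mu_j$. If $2^j<n\le 2^{j+1}$, split the collection into two groups of sizes $\lfloor n/2\rfloor$ and $\lceil n/2\rceil$; both sizes lie in $[1,2^j]$, so by the inductive hypothesis each group's intersection is $(\mu_j,3^j\eta)$-good, and Lemma~\ref{lem:twoGoodSets} (applied with its ``$\mu$'' equal to $\mu_j$ and its ``$\eta$'' equal to $3^j\eta$) shows that the intersection of these two is $(\mu',3^{j+1}\eta)$-good for the $\mu'$ it supplies. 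Taking $\mu_{j+1}=\min(\mu_j,\mu')$ and again invoking monotonicity makes a single constant work in both cases, completing the induction.

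Applying the claim with $j=r$ shows that $\bigcap_{i=1}^{m}S_i$ is $(\mu_r,3^r\eta)$-good, and $3^r\eta=3^r\cdot 3^{-r-1}=\frac13$. Finally I would observe that a $(\mu_r,\frac13)$-good set $A$ must be nonempty once $|V|$ is large enough: since $\frac13<1$, condition (iii) of Definition~\ref{def:goodSet} forces any corresponding collection to contain at least one part $A_1$, and condition (ii) gives $|A_1|\ge\mu_r|V|$; hence for $|V|>n_0:=\lceil (k-1)/\mu_r\rceil$ we have $|A_1|\ge k-1$, so $A_1^{(k-1)}\ne\emptyset$, and $A_1^{(k-1)}\subseteq A$ by condition (i). Since $\mu_r$ depends only on $\mu$ and $m$, so does $n_0$, which is exactly what is required.

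The argument is essentially bookkeeping, and I do not expect a real obstacle; the single point that needs care is ensuring that the constant $\mu_j$ produced by the induction is chosen \emph{uniformly} over all sub-collections of the given size (and over all ground sets $V$), since Lemma~\ref{lem:twoGoodSets} is only being applied when both inputs carry the \emph{same} parameters. This is precisely what the ``take a minimum and use monotonicity'' step arranges.
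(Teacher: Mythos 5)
Your proof is correct and follows essentially the same route as the paper: iterate Lemma~\ref{lem:twoGoodSets} along a balanced binary tree so that the bad parameter triples per level, ending at $3^{-1}<1$, which forces a part of size $\geq\mu_r|V|\geq k-1$ and hence a nonempty intersection. The only (immaterial) difference is bookkeeping: the paper reduces to the case where $m$ is a power of two and inducts on the corollary statement itself, whereas you prove goodness of the intersection by induction for arbitrary $m$ using the monotonicity of the parameters.
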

\begin{proof}
    It suffices to prove the case where \(m\) is a power of \(2\). We proceed by induction.
    
    The case \(m = 1\) is trivial. Now assume that \(m = 2^c\) for some integer \(c \geq 1\).
 
    By Lemma \ref{lem:twoGoodSets}, there exists \(\mu' > 0\) (which depends only on \(\mu\), \(m\)) such that \(S_{2i - 1} \cap S_{2i}\) is 
    a \((\mu', 3^{-c})\)-good subset of \(V^{(k-1)}\) for each \(1 \leq i \leq m/2\). By
    the induction hypothesis, if \(|V|\) is sufficiently large, then \(\bigcap_{i = 1}^{m} S_i = \bigcap_{i = 1}^{m/2} (S_{2i - 1} \cap S_{2i})\) 
    is nonempty as required.
\end{proof}

\subsection{Proof of Theorem \ref{thm:zycle}}

Now we are ready to prove Theorem \ref{thm:zycle}.

\begin{proof}[Proof of Theorem \ref{thm:zycle}]
    Fix \(\varepsilon > 0\), and suppose that \(n\) is sufficiently large in terms of \(\varepsilon\). Let \(G = (V, E)\) be a \(k\)-graph with \(|V| = n\) and \(\delta_{k-1}(G) \geq \varepsilon n\).
    By Lemma \ref{lem:supersaturation}, it suffices to exhibit a homomorphic copy of \(Z_\ell^{(k)-}\) in \(G\).

    \renewcommand{\qedsymbol}{\ensuremath{\blacksquare}}

    \begin{claim} \label{claim:zycle1}
        There exist integers \(m = m(\varepsilon)\), \(n_0 = n_0(\varepsilon)\) such that if \(n > n_0\) then there
        exist sets \(f_1, \ldots, f_m \in V^{(k - 2)}\) with the property that 
        \(|\bigcup_{i=1}^{m} \overleftharpoon{N}(f_i)| > \binom{n}{k-1} - \binom{\varepsilon n}{k - 1}\).
    \end{claim}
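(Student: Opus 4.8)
The plan is to first rephrase the statement as a covering problem and then settle it with a first--moment argument. The observation I would record at the outset is that for $f \in V^{(k-2)}$ and $e \in V^{(k-1)}$ one has $e \in \overleftharpoon{N}(f)$ if and only if $f \subseteq N(e)$: the requirement that $e \cup \{v\} \in E$ for every $v \in f$ forces $f \cap e = \emptyset$ (an edge has $k$ vertices), and given that it is equivalent to $f \subseteq N(e)$; conversely $f \subseteq N(e)$ already implies $f \cap e = \emptyset$. Hence a $(k-1)$-set $e$ is missing from $\bigcup_{i=1}^{m}\overleftharpoon{N}(f_i)$ precisely when no $f_i$ is contained in $N(e)$. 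Since $\delta_{k-1}(G) \ge \varepsilon n$, every $(k-1)$-set $e$ satisfies $|N(e)| \ge \varepsilon n$, so the claim reduces to finding a bounded family $f_1, \dots, f_m \in V^{(k-2)}$ such that $N(e)$ contains at least one $f_i$ for all but fewer than $\binom{\varepsilon n}{k-1}$ of the $(k-1)$-sets $e$.

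To produce such a family I would choose $f_1, \dots, f_m$ independently and uniformly at random from $V^{(k-2)}$ (we may assume $0 < \varepsilon < 1$). For a fixed $e$, $\Pr[f_i \subseteq N(e)] = \binom{|N(e)|}{k-2}\big/\binom{n}{k-2} \ge \binom{\lceil \varepsilon n \rceil}{k-2}\big/\binom{n}{k-2}$, and this ratio tends to $\varepsilon^{k-2}$ as $n \to \infty$, hence exceeds $\varepsilon^{k-2}/2$ once $n$ is large in terms of $\varepsilon$; by independence $\Pr[e \notin \bigcup_{i}\overleftharpoon{N}(f_i)] \le (1-\varepsilon^{k-2}/2)^m$. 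I would then fix $m = m(\varepsilon)$ large enough that $(1-\varepsilon^{k-2}/2)^m < \varepsilon^{k-1}/3$ (possible since $1-\varepsilon^{k-2}/2 \in (0,1)$), so that by linearity of expectation the expected number of uncovered $(k-1)$-sets is below $\tfrac{1}{3}\varepsilon^{k-1}\binom{n}{k-1}$. Because $\binom{\varepsilon n}{k-1}\big/\binom{n}{k-1} \to \varepsilon^{k-1}$ (however one reads $\binom{\varepsilon n}{k-1}$ for non-integer $\varepsilon n$), for $n$ sufficiently large this expectation is strictly smaller than $\binom{\varepsilon n}{k-1}$, so some outcome $f_1, \dots, f_m$ leaves fewer than $\binom{\varepsilon n}{k-1}$ uncovered $(k-1)$-sets, which is exactly $|\bigcup_{i=1}^{m}\overleftharpoon{N}(f_i)| > \binom{n}{k-1} - \binom{\varepsilon n}{k-1}$. (If one wants the $f_i$ genuinely distinct, deleting repetitions does not change the union and padding with arbitrary further $(k-2)$-sets cannot shrink it; alternatively the same bound follows deterministically by greedily picking the $(k-2)$-set covering the most still-uncovered $e$ at each step.)

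I do not anticipate a genuine obstacle here: the entire content is the reformulation $e \in \overleftharpoon{N}(f) \Leftrightarrow f \subseteq N(e)$ together with a union/first--moment estimate. The only point requiring (routine) care is the elementary comparison of $\binom{\varepsilon n}{j}$ with $\varepsilon^{j}\binom{n}{j}$ for $j = k-2$ and $j = k-1$; this is what pins down the threshold $n_0(\varepsilon)$ and, importantly, guarantees that $m$ depends on $\varepsilon$ alone and never on $n$, as the claim demands.
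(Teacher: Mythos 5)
Your proposal is correct. It differs from the paper's proof only in packaging: you choose $f_1,\dots,f_m$ i.i.d.\ uniformly from $V^{(k-2)}$, bound $\Pr[f_i\subseteq N(e)]\ge \binom{\lceil\varepsilon n\rceil}{k-2}/\binom{n}{k-2}>\varepsilon^{k-2}/2$ for large $n$, and finish with a union bound per $(k-1)$-set plus linearity of expectation, comparing the expected number of uncovered sets with $\binom{\varepsilon n}{k-1}\sim\varepsilon^{k-1}\binom{n}{k-1}$. The paper instead runs the derandomized version you mention in passing: it picks the $f_{r+1}$ maximizing coverage of the current uncovered family $S_r$, using the double count $\sum_{f\in V^{(k-2)}}|\overleftharpoon{N}(f)\cap S_r|=\sum_{e\in S_r}\binom{|N(e)|}{k-2}\ge|S_r|\binom{\varepsilon n}{k-2}$ (which encodes exactly your reformulation $e\in\overleftharpoon{N}(f)\Leftrightarrow f\subseteq N(e)$), yielding $|S_a|<(1-c)^a\binom{n}{k-1}$ with $c=\varepsilon^{k-2}/2$ and then choosing $m$ with $(1-c)^m<\varepsilon^{k-1}/2$. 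The two arguments are quantitatively identical --- your expected count $(1-c)^m\binom{n}{k-1}$ is the paper's greedy bound on $|S_m|$ --- and both make $m$ depend only on $\varepsilon$ (and $k$); the probabilistic phrasing is marginally slicker to state, while the greedy phrasing avoids any appeal to randomness and matches the analogous argument the paper reuses in Claim 4.7. Your handling of the side issues (why $f\cap e=\emptyset$ is automatic, possible repetitions among the $f_i$, non-integrality of $\varepsilon n$) is fine and does not affect the argument.
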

    \begin{proof}[Proof of claim]
        We will choose the sets \(f_1, \ldots, f_m\) one at a time, and we shall write \(S_a = V^{(k-1)} \setminus \bigcup_{i=1}^{a} \overleftharpoon{N}(f_i)\).
        
        Suppose we have chosen \(f_1, \ldots, f_r\) already. By double counting and using the codegree condition, we have 
        \begin{equation*}
            \mathlarger\sum_{f \in V^{(k-2)}} |\overleftharpoon{N}(f) \cap S_r| = \mathlarger\sum_{e \in S_r} \binom{|N(e)|}{k-2} 
                                                                                \geq |S_r| \; \binom{\varepsilon n}{k-2}.
        \end{equation*}
    Hence we can choose \(f_{r + 1}\) such that 
    \begin{equation*}
        |\overleftharpoon{N}(f_{r+1}) \cap S_r| \geq |S_r|\binom{\varepsilon n}{k-2} \big/ \binom{n}{k-2} > c \cdot |S_r|,
    \end{equation*}
    where \(c = \tfrac{\varepsilon^{k-2}}{2}\), and the final inequality holds if \(n\) is sufficiently large since \(\binom{\varepsilon n}{k - 2} \sim \varepsilon^{k - 2}\binom{n}{k - 2}\). With this choice of \(f_{r+1}\),
     \(|S_{r+1}| < (1 - c)|S_r|\), so by induction we obtain that 
    \(|S_a| < (1 - c)^{a} \cdot \binom{n}{k-1}\). 
    
    Choose \(m\) such that \((1 - c)^{m} < \tfrac{\varepsilon^{k-1}}{2}\). Using the fact that
    \( \binom{\varepsilon n}{k - 1} \sim \varepsilon^{k-1}\binom{n}{k-1}\), if \(n\) is sufficiently large we have 
    \begin{equation*}
        |\bigcup_{i = 1}^{m} \overleftharpoon{N}(f_i)| = \binom{n}{k - 1} - |S_m| > \binom{n}{k - 1} - \frac{\varepsilon^{k -1}}{2} \binom{n}{k - 1} > \binom{n}{k - 1} - \binom{\varepsilon n}{k - 1}. \qedhere
    \end{equation*}
    \end{proof}
    
    Now define for each \(1 \leq i \leq m\) the following sets:
    \begin{equation*}
        \mathcal{N}_i = \bigcup_{w \in V \setminus f_i}  N(f_i w)^{(k - 1)}.
    \end{equation*}
    Thus if \(e \in \mathcal{N}_i\), then \(f_i w \rhd e\) for some \(w \in V \setminus f_i\). The following claim, together with Corollary~\ref{cor:intersectionOfGoodSets}, shows that \(\bigcap_{i = 1}^{m}\mathcal{N}_i\) is nonempty. 
    
    \begin{claim} \label{claim:zycle2}
        For every \(\eta > 0\), there exist \(\mu = \mu(\eta, \varepsilon) > 0\) and an integer \(n_0 = n_0(\eta, \varepsilon)\) such that if \(n > n_0\) then
        \(\mathcal{N}_i\) is a \((\mu, \eta)\)-good subset of \(V^{(k - 1)}\) for each \(1 \leq i \leq m\).
    \end{claim}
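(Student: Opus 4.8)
The plan is to construct the subsets corresponding to \(\mathcal{N}_i\) greedily, by repeatedly passing to neighbourhoods of the form \(N(f_i w)\). The key observation is that for any \(w \in V \setminus f_i\) we have \(N(f_i w)^{(k-1)} \subseteq \mathcal{N}_i\), so in fact \(A^{(k-1)} \subseteq \mathcal{N}_i\) for \emph{every} subset \(A \subseteq N(f_i w)\); thus any such subset is a legitimate building block. Hence it suffices to cover all but at most \(\eta n\) vertices of \(V\) by pairwise disjoint sets, each of linear size, each contained in some \(N(f_i w)\). Fix \(i\), set \(\mu = \varepsilon \eta / 2\), and take \(n_0\) large enough that \(\eta n > 2(k-2)\) whenever \(n > n_0\).

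For the greedy step, suppose pairwise disjoint sets \(A_1, \ldots, A_r\) have been chosen, each of size at least \(\mu n\) and each a subset of some \(N(f_i w_j)\), and write \(U = V \setminus \bigcup_{j \leq r} A_j\); note \(f_i \subseteq U\). If \(|U| \leq \eta n\) we stop. Otherwise, reindexing by pairs \((v,w)\) with \(f_i \cup \{v,w\} \in E\) (valid because a neighbourhood of a \((k-1)\)-set never meets \(f_i\)) and using the codegree condition gives
\begin{equation*}
    \sum_{w \in V \setminus f_i} \bigl|N(f_i w) \cap U\bigr| \;=\; \sum_{v \in U \setminus f_i} d_G(f_i v) \;\geq\; \bigl(|U| - (k-2)\bigr)\,\varepsilon n \;\geq\; \tfrac{1}{2}\,\varepsilon \eta\, n^2,
\end{equation*}
since \(|U| > \eta n\) and \(n > n_0\). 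Averaging over the fewer than \(n\) choices of \(w\) yields some \(w_{r+1}\) with \(\bigl|N(f_i w_{r+1}) \cap U\bigr| \geq \tfrac{1}{2}\varepsilon\eta n = \mu n\); we set \(A_{r+1} = N(f_i w_{r+1}) \cap U\), which lies in \(U\) (hence is disjoint from \(A_1, \ldots, A_r\)) and satisfies \(A_{r+1}^{(k-1)} \subseteq N(f_i w_{r+1})^{(k-1)} \subseteq \mathcal{N}_i\).

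Each step removes at least \(\mu n\) vertices from \(U\), so the process halts after some \(t \leq \mu^{-1}\) steps, at which point \(|V \setminus \bigcup_{j \leq t} A_j| \leq \eta n\). The resulting \(A_1, \ldots, A_t\) are pairwise disjoint, each of size at least \(\mu n = \mu |V|\), and miss at most \(\eta |V|\) vertices, so they certify that \(\mathcal{N}_i\) is \((\mu, \eta)\)-good; as \(\mu\) and \(n_0\) depend only on \(\varepsilon\) and \(\eta\), this is uniform in \(i\). The only delicate point is the double count — one must check that each inner sum \(d_G(f_i v)\) really is a \((k-1)\)-codegree (i.e.\ \(v \notin f_i\), which holds since \(f_i \subseteq U\)), so that the bound \(\delta_{k-1}(G) \geq \varepsilon n\) applies — but there is no genuine obstacle, as the codegree hypothesis forces linear progress at every stage.
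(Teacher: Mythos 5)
Your proposal is correct and follows essentially the same greedy covering argument as the paper: both double count \(\sum_{w}|N(f_iw)\cap U|=\sum_{v\in U\setminus f_i}d_G(f_iv)\) against the codegree condition to peel off a linear-sized block inside some \(N(f_iw)\) at each step until fewer than \(\eta n\) vertices remain. The only cosmetic difference is that the paper tracks a geometric decay of the uncovered set and truncates at the first step it drops below \(\eta n\) (yielding \(\mu=\varepsilon\eta/k\)), whereas you demand an absolute bound \(\mu n=\varepsilon\eta n/2\) per block and bound the number of steps by \(\mu^{-1}\).
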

    
    \begin{proof}[Proof of claim] 
        Fix \(1 \leq i \leq m\). We will choose vertices \(w_1, \ldots, w_q\) one
        at a time, and we will write \(S_a = V \setminus \bigcup_{j=1}^{a} N(f_i w_j)\). We will stop choosing vertices if it is ever the case that \(S_q = f_i\).
        
        Suppose we have chosen \(w_1, \ldots, w_r\) already. By double counting and using the codegree condition, we have 
        \begin{equation*}
            \sum_{w \in V \setminus f_i}  |N(f_i w) \cap S_r| = \sum_{x \in S_r \setminus f_i} |N(f_i x)| \geq |S_r \setminus f_i| \cdot {\varepsilon n} \geq |S_r| \cdot c n.
        \end{equation*}
        where \(c = \varepsilon / k\), and the final inequality holds since \(S_r \neq f_i\). Hence we can choose \(w_{r+1}\) such that 
        \begin{equation*}
            |N(f_i w_{r+1}) \cap S_r| \geq \frac{|S_r| \cdot c n}{n - k+2} > c \cdot |S_r|,
        \end{equation*}
        With this choice of \(w_{r+1}\), \(|S_{r+1}| < (1 - c)|S_r|\),
        and so by induction \(|S_a| < (1 - c)^a \cdot n\).
    
        Choose \(q\) such that \((1 - c)^{q} < \eta\). Then \(|S_q| < \eta n\), so there exists a least 
        \(1 \leq p \leq q\) such that \(|S_p| < \eta n\). 

        Write \(T_1 = V \setminus S_1\), and \(T_a = S_{a - 1} \setminus S_a\) for \(2 \leq a \leq p\).
        Note that for each \(1 \leq r \leq p\), \(T_r^{(k-1)} \subset N(f_i w_r)^{(k - 1)} \subset \mathcal{N}_i\), and 
        \(|T_r| > c\cdot |S_{r-1}| \geq c\eta n\). Finally, \(|V \setminus \bigcup_{j=1}^{p} T_j| = |S_p| < \eta n\) by the definition of \(p\). 
        Hence \(\mathcal{N}_i\) is a 
        \((c\eta, \eta)\)-good subset of \(V^{(k-1)}\), with corresponding sets \(T_1, \ldots, T_p\).
    \end{proof}

    \renewcommand{\qedsymbol}{\ensuremath{\square}}

    By Claim \ref{claim:zycle2} and Corollary \ref{cor:intersectionOfGoodSets}, it follows that \(\bigcap_{i = 1}^{m} \mathcal{N}_i\) is nonempty - 
    so we can choose some \(e_1 \in \bigcap_{i = 1}^{m} \mathcal{N}_i\) arbitrarily. 

    Now, by the codegree condition we can choose  \(e_2, \ldots, e_{\ell-2} \in V^{(k-1)}\) such that 
    \begin{equation*}
        e_1 \rhd e_2 \rhd \cdots \rhd e_{\ell-2}.
    \end{equation*}

    Again by the codegree condition we have \(|N(e_{\ell-2})^{(k - 1)}| \geq \binom{\varepsilon n}{k-1}\), 
    and so the set
    \begin{equation*}
        S = \biggl(\bigcup_{i=1}^{m} \overleftharpoon{N}(f_i)\biggr) \cap N(e_{\ell-2})^{(k - 1)}
    \end{equation*}
    is nonempty - so we can choose \(e_{\ell-1} \in S\) arbitrarily. Then there exists \(1 \leq i \leq m\) 
    such that \(f_i \in N(e_{\ell-1})^{(k - 2)}\).
    
    Finally, since \(e_1 \in \mathcal{N}_i\), there exists a vertex \(w\) such that 
    \(f_i w \rhd e_1\). 

    Summarising the above, we see that \(G \cup e_{\ell-1}w\) contains the homomorphic copy of \(Z_\ell^{(k)}\) given by
    \begin{equation*}
        e_1 \rhd \cdots \rhd e_{\ell-1} \rhd f_i w \rhd e_1.
    \end{equation*}
    That is, \(G\) contains a homomorphic copy of \(Z_\ell^{(k)-}\) as required.
\end{proof}

\section*{Acknowledgements}
The author is extremely grateful to Oliver Janzer for supervising this research and for all his comments on the manuscript.

\bibliographystyle{abbrv}
\bibliography{references}

\end{document}